\newcommand*{\mailto}[1]{\href{mailto:#1}{\nolinkurl{#1}}}
\newcommand{\arxiv}[1]{\href{http://arxiv.org/abs/#1}{arXiv:#1}}
\newcommand{\RN}[1]{%
  \textup{\uppercase\expandafter{\romannumeral#1}}%
}
\renewcommand{\left}{\mleft}
\renewcommand{\right}{\mright}
\newcommand{\epsi}{\varepsilon}
\newcommand{\X}{\ensuremath{\mathcal{X}}}
\newcommand{\Y}{\ensuremath{\mathcal{Y}}}
\newcommand{\D}{\ensuremath{\mathcal{D}}}
\newcommand{\abs}[1]{\left\vert#1\right\vert}
\newcommand{\Real}{\mathbb R}
\DeclareMathOperator{\sgn}{sgn}
\newcommand{\dee}{\,d}
\newtheorem{theorem}{Theorem}[section]
\newtheorem{lemma}[theorem]{Lemma}
\newtheorem{definition}[theorem]{Definition}
\newtheorem{proposition}[theorem]{Proposition}
\newtheorem{remark}[theorem]{Remark}
\numberwithin{equation}{section}
\begin{document}

\title[Wave breaking for the NVW equation]{Wave breaking for the nonlinear variational wave equation}

\author[S.T. Galtung]{Sondre Tesdal Galtung}
\address{Department of Mathematical Sciences\\ NTNU Norwegian University of Science and Technology\\ NO-7491 Trondheim\\ Norway}
\email{\mailto{sondre.galtung@sintef.no}}

\author[K. Grunert]{Katrin Grunert}
\address{Department of Mathematical Sciences\\ NTNU Norwegian University of Science and Technology\\ NO-7491 Trondheim\\ Norway}
\email{\mailto{katrin.grunert@ntnu.no}}
\urladdr{\url{https://www.ntnu.edu/employees/katrin.grunert}}

\thanks{We acknowledge support by the grant {\it Wave Phenomena and Stability --- a Shocking Combination (WaPheS)}  from the Research Council of Norway. }  
\subjclass{Primary: 35L70, 35B44  Secondary: 35C07, 35L05 }
\keywords{nonlinear variational wave equation, conservative solutions, blow up}

\begin{abstract}
Following conservative solutions of the nonlinear variational wave equation $u_{tt}-c(u)(c(u)u_x)_x=0$ along forward and backward characteristics, we identify criteria, which guarantee that wave breaking either occurs in the nearby future or occurred recently. Thereafter, we apply the established criteria to show that not every traveling wave solution is a conservative solution. Furthermore, we show that conservative solutions can locally behave like solutions to the linear wave equation and hence energy that concentrates on sets of measure zero might remain concentrated instead of spreading out immediately. 
\end{abstract}

\maketitle

\section{Introduction}

We will here study the nonlinear variational wave equation, which for an unknown scalar $u = u(t,x)$ reads
\begin{equation}\label{NVW}
	u_{tt}-c(u)(c(u)u_x)_x=0,
\end{equation}
where the real numbers $t$ and $x$ respectively represent time and position.
Equation \eqref{NVW} was derived in \cite{Sax1989} as a simplified model for the director field of a nematic liquid crystal, where $u$ represents the angle of director fields restricted to the unit circle.
The term \textit{variational} comes from the fact that \eqref{NVW} is the Euler--Lagrange equation of the Lagrangian
\begin{equation*}
	\iint \left(u_t^2 - (c(u)u_x)^2\right)\dee x \dee t
\end{equation*}
associated with the Oseen--Frank theory of liquid crystals, see \cite[Section 2]{GlaHunZhe1997} for a derivation.
On a related note, smooth solutions of \eqref{NVW} preserve the energy
\begin{equation}\label{eq:energy}
	\frac12\int_{\Real} \left(u_t^2 + (c(u)u_x)^2\right)\dee x.
\end{equation}
In \cite{HunSax1991} the authors considered weakly nonlinear, unidirectional waves satisfying \eqref{NVW}, and derived an asymptotic equation which turns out to be completely integrable.
This equation is now known as the Hunter--Saxton equation and has spawned its own vast literature, see e.g. \cite{ChrGru2024, Daf2011, GaLiWo2023, GruHo2022} and the references therein.

\subsection{Singularity formation}
The nonlinear variational wave equation \eqref{NVW} can be seen as a particular, in-between case in the one-parameter family of equations 
\begin{equation}\label{eq:1para}
	u_{tt} = c(u)^{2-\alpha}(c(u)^\alpha u_x)_x, \qquad 0 \le \alpha \le 2,
\end{equation}
see \cite{GlaHunZhe1997}.
One extreme here is the nonlinear wave equation where $\alpha=0$
\begin{equation}\label{eq:lind}
	u_{tt} - c(u)^2 u_{xx} = 0,
\end{equation}
studied in \cite{Lin1992}, which is expected to have globally smooth solutions.
The other extreme is the conservation law-like equation
\begin{equation}\label{eq:psys}
	u_{tt} - \left(c(u)^2u_x\right)_x = 0.
\end{equation}
In fact, \eqref{eq:psys} can be seen as a particular case of the so-called $p$-system \cite[Example 5.3]{holden2015front}, which reads $u_t = v_x$ and $v_t = -(p(u))_x$ for some pressure-like term $p$.
The choice $p'(u) = -c(u)^2 $ yields \eqref{eq:psys} and ensures the system is hyperbolic with eigenvalues $\pm c(u)$, and, typical for such systems, we can expect shocks and discontinuous solutions.

If we write out the spatial derivative in the one-parameter family \eqref{eq:1para} for nonzero $\alpha$, we get a quadratic term in $u_x$, and the above discussion hints at this term being an obstacle for retaining smoothness of solutions.
Indeed, it drives singularity formation such as blow-up and oscillations, cf.\ \cite{GlaHunZhe1997}.
Our emphasis is on blow-up, and to this end we consider the quantities
\begin{equation}\label{eq:RS}
	R \coloneqq u_t + c(u)u_x, \qquad S \coloneqq u_t - c(u)u_x,
\end{equation}
which are  Riemann invariants for the linear wave equation.
That is, when $c(u) \equiv c > 0$, $R$ and $S$ are constant along the backward and forward characteristics, respectively $x = -ct$ and $x = ct$.
For smooth solutions of \eqref{NVW} we may rewrite the equation as a system of equations in $R$ and $S$, namely
\begin{equation}\label{eq:RSsys}
	R_t -c(u)R_x = \frac{c'(u)}{4 c(u)} \left( R^2 - S^2 \right),  \qquad S_t + c(u) S_x = \frac{c'(u)}{4c(u)} \left( S^2 - R^2 \right).
\end{equation}
Observe that we can add the two equations in \eqref{eq:RSsys} and divide by two to recover \eqref{NVW}.
If we for simplicity replace the factors involving $c$ and $c'$ in \eqref{eq:RSsys} with $1$, we obtain what is known as the  Carleman system, for which the solutions $R$ and $S$ blow up along respectively $x = -t$ and $x = t$ for appropriate sign choices in the initial data. This blow-up mechanism is analogous to that of a Riccati equation.
The same idea is employed in \cite{GlaHunZhe1996} under additional assumptions on the sign of $c'$.
It is much harder to predict blow-up when $c'$ is allowed to change sign, as this may delay or even prevent it from happening.

Clearly, one cannot expect \eqref{NVW} to have smooth solutions in general, and so a weaker notion of solution is needed beyond their breakdown. 
Assuming $u$ smooth enough, we have the pointwise identities
\begin{equation*}
	\left(R^2 + S^2\right)_t - \left(c(u)(R^2-S^2)\right)_x = 0, \qquad \left(\frac{R^2-S^2}{c(u)}\right)_t - \left(R^2+S^2\right)_x = 0,
\end{equation*}
where we recognize $\frac14 (R^2 + S^2) = \frac12(u_t^2 + (c(u)u_x)^2)$ as the energy density from \eqref{eq:energy}.
Let us then introduce the positive Radon measures $\mu$ and $\nu$ with their respective absolutely continuous parts $\frac14 R^2 \dee x$ and $\frac14 S^2 \dee x$ with respect to the Lebesgue measure.
Based on the previous pointwise identities we then require these measures to satisfy the distributional identities
\begin{equation}\label{eq:NVW:meas}
	(\mu + \nu)_t - \left(c(u)(\mu-\nu)\right)_x = 0, \qquad \left(\frac{\mu-\nu}{c(u)}\right)_t - (\mu+\nu)_x = 0.
\end{equation}
Together with a weak formulation of \eqref{NVW}, this is how \textit{conservative solutions} of the nonlinear variational wave equation were defined in \cite{BreZhe2006}.
Here $(\mu+\nu)(t,\Real)$ is the total energy, which is conserved in time for conservative solutions.
Recalling \eqref{eq:energy}, this implies that $u_x^2$ remains integrable, which in turn means that $u(t,\cdot)$ is H\"{o}lder-continuous.
Hence, conservative solutions of \eqref{NVW} still have more regularity than we can expect from solutions of \eqref{eq:psys}.

These conservative solutions are typically studied using characteristics, see \cite{BreZhe2006,HR}, which in this case consist of two families of curves with velocities $\pm c(u)$, as opposed to the straight lines of the linear wave equation.
The aforementioned Hunter--Saxton equation and the related Camassa--Holm equation share the conservative solution concept, and this is typically studied using characteristics, cf.\ \cite{Bre2016}.
These equations model unidirectional wave propagation, and so there is only one family of characteristic curves traveling with velocity $u$, owing to the Burgers advection term $uu_x$ featured in both equations.
This makes them considerably easier to study in comparison to \eqref{NVW} with its two families of characteristics.
For instance, with these characteristics one typically changes to so-called Lagrangian coordinates where one replaces the Eulerian coordinate $x$ in a fixed reference frame with $\xi$ by following a given characteristic, i.e., $(t,x) \mapsto (t,\xi)$.
For the nonlinear variational wave equation, the change to Lagrangian coordinates involves two such labels and a coordinate change in both space and time, $(t,x) \mapsto (\xi,\eta)$, see \cite{HR}.
Furthermore, characteristics have been used to show uniqueness of conservative solutions in \cite{BCZ}.
On a similar note, we mention that \eqref{NVW} appears as an example in an application of so-called singular characteristics \cite[Section 8.4]{Melikyan}, see also \cite{KorKorMel2009}.

\subsection{Prediction of wave-breaking}
For the Hunter--Saxton and Camassa--Holm equations, blow-up happens for the slope $u_x$.
Since their energy densities contain the term $u_x^2$, $u$ will remain both bounded and H\"{o}lder-continuous while this happens, which leads to a vertical wave profile at these points.
This is called wave-breaking and leads to the breakdown of classical solutions.
An analogous mechanism drives wave-breaking for \eqref{NVW} where blow-up happens for one or both of $R$ and $S$, while $u$ remains bounded.
Indeed, one such blow-up result for \eqref{NVW} is found in \cite[Theorem 1]{GlaHunZhe1996} where the initial data takes a special form.
That is, assuming $c'(\bar{u}) \neq 0$ for some constant $\bar{u} \in \Real$, one takes 
\begin{equation*}
	u(0,x) = \bar{u} + \epsi \phi\left(\frac{x}{\epsi}\right), \qquad u_t(0,x) = -\sgn(c'(\bar{u}))c(u(0,x))u_x(0,x)
\end{equation*}
for sufficiently small $\epsi > 0$ and a nonzero, compactly supported $\phi \in C^1_{c}(0,1)$.
Moreover, $c \in C^2(\Real)$ is assumed strictly positive, and bounded from both below and above.
For such data, depending on the sign of $c'(\bar{u})$, exactly one of $R$ and $S$ will initially be identically zero.
The idea is then to show that one of $R$ and $S$ will remain of order $\epsi$, while the other will blow up along a characteristic in finite time.

Let us mention some other (non-)existence results related to the nonlinear variational wave equation \eqref{NVW}.
A local-in-time existence result for classical solutions is found in \cite{ZhaZhe2001}, together with global-in-time existence results for what they call rarefactive solutions where one assumes $c'(x)> 0$ and $R(0,x), S(0,x) \le 0$.
In the latter case, we may observe that the combination of signs prohibits $R$ and $S$ in \eqref{eq:RSsys} from blowing up along their appropriate characteristics.
Another potential problem may occur when the wave speed $c$ is not bounded away from zero; this is called finite-time degeneracy and is studied in \cite{Sug2017}.The same author \cite{Sug2022} extended the aforementioned blow-up result of \cite{GlaHunZhe1996} to \eqref{eq:1para} for $\alpha \in (0,1]$ under similar assumptions.

The aim of this paper is to give more new, more general and precise estimates on when to expect wave-breaking for \eqref{NVW}, in the spirit of earlier works on related equations, e.g., \cite{Gru2015}.
Before we summarize the content of these result, we will state our assumptions on the wave speed $c = c(u)$ in the remainder of the paper.
We assume there exists $\kappa>1$ such that 
\begin{equation}\label{cond:c}
	\frac{1}{\kappa}\leq c(x)\leq \kappa \quad \text{ for all } x\in \Real.
\end{equation}
Moreover, there exists $\lambda\geq 0$ and $\bar\lambda\geq 0$ such that 
\begin{equation}\label{cond:cder}
	\vert c'(x)\vert \leq \lambda \quad \text{ and } \quad \vert c''(x) \vert \leq \bar\lambda \quad \text{ for all } x \in \Real.
\end{equation}

Next we give a summary of our main results, that is, sufficient conditions on the initial data for us to predict wave-breaking for \eqref{NVW}, and to this end we denote by $u_0$, $R_0$, and $S_0$ the initial values of $u$, $R$, and $S$.
Moreover, let us write $E_0$ for the initial energy $(\mu_0+\nu_0)(\Real)$.
In a similar vein to previous results, we will leverage the Riccati-type blow-up mechanism, but here we shall do so in Lagrangian coordinates.
Since the blow-up of either $R$ or $S$ corresponds to wave-breaking, this enables us to give a lower bound $t_l$ and an upper bound $t_u$ for when this happens.

For instance, the quantity $R$ may blow up along backward characteristics, that is, characteristics moving with velocity $-c(u)$.
If the initial condition $R_0$ satisfies our condition for blow-up, then, depending on the sign of $c'(u_0(\bar{x})) R_0(\bar{x})$, $R$ will either have already have blown up in the past, or will blow up in the future.

To be more precise, consider a point $\bar{x} \in \Real$:
\begin{itemize}
	\item \textbf{Blow-up along backward characteristics}: If $\abs{R_0(\bar{x})}$ is large enough compared to $\lambda \kappa^2 E_0$, and additionally satisfies an inequality which involves $\abs{c'(u_0(\bar{x}))} $, then $R$ blows up along a backward characteristic.
	This blow-up happens in the future if $c'(u_0(\bar{x})) R_0(\bar{x})$ is positive, otherwise it has happened in the past.
	\item \textbf{Blow-up along forward characteristic}: If $\abs{S_0(\bar{x})}$ is large enough compared to $\lambda \kappa^2 E_0$, and additionally satisfies an inequality which involves $\abs{c'(u_0(\bar{x}))}$, then $S$ blows up along a forward characteristic.
	This blow-up happens in the future if  $c'(u_0(\bar{x})) S_0(\bar{x})$ is positive, otherwise it has happened in the past.
\end{itemize}
In both cases we are able to determine a time interval $[t_l, t_u]$ where wave-breaking happens, and we refer to Theorems \ref{thm:wb1}--\ref{thm:wb4} in Section 3 for the details.

As an application of our results, we will in Section 4 use our wave-breaking criteria to prove that a traveling wave, described in \cite{GlaHunZhe1996} as ``hut''-shaped, is not a conservative solution of the nonlinear variational wave equation.
This is done by showing that the conservative solution evolving from the initial traveling-wave profile must exhibit a form of wave-breaking which is incompatible with the evolution of the traveling wave.
As such, this is an alternative to verifying that the measure-equations \eqref{eq:NVW:meas} do not hold for this traveling wave.

We finish Section 4 with an example that highlights a major difference  between \eqref{NVW} and the Hunter--Saxton and Camassa--Holm equations, namely that concentrated energy need not immediately spread out again, but may remain concentrated over time.
This can happen when the derivative of the wave speed  $c = c(u)$ is zero.
In our example, the left- and right-moving Radon measures $\mu$ and $\nu$ associated with the concentrated energy behave locally like a solution of the linear wave equation, that is, they move with constant velocities.

\section{Background}\label{sec:back}

The prediction of wave breaking is based on a good understanding of how wave breaking can be described and identified. In the case of conservative solutions the unique weak solution to any admissible initial data can be derived via a generalized method of characteristics, see \cite{HR} and \cite{BCZ}. Thus, to predict wave breaking one needs to understand well the interplay between Eulerian and Lagrangian coordinates as well as the underlying system of partial differential equations in Lagrangian coordinates.  Since the generalized method of characteristics for conservative solutions has been discussed in detail in \cite{HR}, we here focus, on the one hand, on sketching the method of characteristics from \cite{HR} and, on the other hand, on  how wave breaking is characterized in both Eulerian and Lagrangian coordinates. 

In the case of the linear wave equation, i.e., $c(u)=c\in \Real$, every classical solution is given by d'Alemberts formula, which implies
\begin{equation*}
u(t,x)= f(x+ct)+g(x-ct).
\end{equation*}
That is, for $c > 0$, $u(t,x)$ consists of one part, $f$, traveling to the left and one part, $g$, traveling to the right with speed $c$. Therefore one can state the initial data at $t=0$ in terms of $f$ and $g$ as 
\begin{equation*}
(u_0(x), (u_{0,t}+cu_{0,x})(x), (u_{0,t}-cu_{0,x})(x))= ((f+g)(x), 2cf'(x), -2cg'(x)),
\end{equation*}
instead of using $u_0$ and $u_{0,t}$. Furthermore, every classical solution to the linear wave equation can be computed using the classical method of characteristics. This means that one introduces a change of variables $(t,x) \to (\xi, \eta)$, such that the backward and forward characteristics, given by $y(t, \xi)= \xi- ct$ and $z(t,\eta)= \eta+ct$, are mapped to vertical and horizontal lines, respectively. This is achieved by implicitly defining $(\xi, \eta) \mapsto (t(\xi, \eta), x(\xi, \eta))$ as
\begin{equation}\label{crossing:point}
x(\xi, \eta)= y(t(\xi, \eta), \xi)= z(t(\xi, \eta), \eta).
\end{equation}
Or, in other words, $(t(\xi, \eta), x(\xi, \eta))$ denotes the unique intersection point of $y(t, \xi)$ and $z(t, \eta)$. Moreover,  \eqref{crossing:point} implies that 
\begin{equation*}
x_\xi (\xi, \eta)= ct_\xi(\xi, \eta) \quad \text{ and }\quad x_\eta(\xi, \eta)=- ct_\eta(\xi, \eta),
\end{equation*}
which is the Lagrangian formulation of the forward and backward characteristics.

As a consequence, one has, in this case, that the line $\{(0,x)\mid x\in \Real\}\subset \Real^2$, for which the initial data is defined, is mapped to the curve $\bar{\mathcal{C}}=\{(\xi, \eta)\mid \xi=\eta\}$ in $\Real^2$, along which the initial data in Lagrangian coordinates is given by 3 triplets 
\begin{subequations}\label{Lagr:initdata}
\begin{align}
(t(\xi, \xi), x(\xi, \xi), U(\xi, \xi))& =(0, \xi, u_0(\xi)),\\
(t_\xi(\xi, \xi), x_\xi(\xi, \xi), U_\xi(\xi, \xi))&=(\frac1{2c}, \frac12, \frac{1}{2c}(u_{0,t}+cu_{0,x})(\xi)),\\
(t_\eta(\xi, \xi), x_\eta(\xi, \xi), U_\eta(\xi, \xi))& = (-\frac1{2c}, \frac12, -\frac{1}{2c}(u_{0,t}-cu_{0,x})(\xi) ).
\end{align}
\end{subequations}

To compute $(t, x, U)$ on all of $\Real^2$, one solves the reformulation of the linear wave equation in Lagrangian coordinates, which is given by 
\begin{equation*}
(t_{\xi, \eta}(\xi, \eta), x_{\xi, \eta}(\xi, \eta), U_{\xi, \eta}(\xi, \eta))=(0,0,0) \quad \text{ for all } (\xi, \eta)\in \Real^2, 
\end{equation*}
for the initial data given by \eqref{Lagr:initdata}. 

To finally extract the solution in Eulerian coordinates from the tuplets $((t,x,U),$ $( t_\xi, x_\xi, U_\xi), (t_\eta, x_\eta,U_\eta))$, one uses the following relations
\begin{subequations}\label{backtoEuler}
\begin{align}
u(t(\xi, \eta), x(\xi, \eta))& = U(\xi, \eta),\\
\frac{1}{c}(u_t+cu_x)(t(\xi, \eta), x(\xi, \eta))x_\xi(\xi, \eta)& = U_\xi (\xi, \eta),\\
-\frac{1}{c} (u_t-cu_x)(t(\xi, \eta), x(\xi, \eta))x_\eta(\xi, \eta)& = U_\eta(\xi, \eta).
\end{align}
\end{subequations}

In the case of the nonlinear variational wave equation, no analogue to d'Alembert's formula is known, but a generalized method of characteristics, which mimics the approach for the linear wave equation, makes it possible to follow the solution along backward and forward characteristics, which are given by 
\begin{equation}\label{def:char}
y_t(t,\xi)= -c(u(t, y(t,\xi))) \quad \text{ and }\quad z_t(t, \eta)= c(u(t, z(t, \eta))), 
\end{equation}
respectively. Thus each initial data at $t=0$ will contain triplets 
\begin{equation*}
(u_0,R_0,S_0)= (u_0, u_{0,t}+c(u_0)u_{0,x}, u_{0,t}-c(u_0)u_{0,x}).
\end{equation*}  
On the other hand, if wave breaking occurs in the future, which is not possible for the linear wave equation, either $(u_t+c(u)u_x)(t,x)$ or $(u_t-c(u)u_x)(t,x)$ become unbounded pointwise and energy may concentrate on sets of measure zero either along backward or forward characteristics, respectively. Thus the set of admissible initial data consists of tuplets $(u_0,R_0,S_0, \mu_0, \nu_0)$, where $\mu_0$ and $\nu_0$ are positive finite Radon measures, which describe the concentration of energy and split the total energy $\mu_0+\nu_0$ into two parts.

\begin{definition}[Eulerian coordinates]\label{def:eul} The set $\D$ consists of all tuplets $(u,R,S, \mu, \nu)$ such that 
\begin{equation*}
(u,R,S)=(u, u_t+c(u)u_x, u_t-c(u)u_x)\in [L^2(\Real)]^3 
\end{equation*}
and $\mu$ and $\nu$ are positive, finite Radon measures whose absolutely continuous parts $\mu_{ac}$ and $\nu_{ac}$ satisfy
\begin{equation}\label{meas:abs}
d\mu_{ac}=\frac14 R^2 dx \quad \text{ and }\quad d\nu_{ac}= \frac14 S^2 dx.
\end{equation}
\end{definition}

As for the linear wave equation, one needs to associate to any  initial data $(u_0,R_0,S_0, \mu_0,\nu_0)\in \mathcal{D}$ a corresponding set of Lagrangian coordinates $(Z, Z_\xi, Z_\eta)$, which is given along a curve 
\begin{equation*}
\bar{\mathcal{C}}=\{(\bar \X(s), \bar \Y(s))\in \Real^2 \mid s\in \Real\}.
\end{equation*}

The definition of this curve is based on the measures $(\mu_0, \nu_0)$, which contain all the information about initial energy concentration. In particular, $\bar{\mathcal{C}}$ must guarantee that when energy concentrates in single point, then the spreading out of the energy afterwards mimics in a sense the behavior of a rarefaction wave. Therefore, introduce
\begin{subequations}\label{x1x2}
\begin{align}
x_1(X)&= \sup\{ x\in \Real\mid x+ \mu_0((-\infty, x))<X\},\\
x_2(Y)& = \sup\{x\in \Real \mid x+ \nu_0((-\infty,x))<Y\},
\end{align}
\end{subequations}
which are increasing and Lipschitz continuous functions, then 
\begin{equation*}
\bar \X(s)= \sup\{X\in \Real\mid x_1(X')<x_2(2s-X') \text{ for all } X'<X\}
\end{equation*}
and $\bar\Y(s)= 2s-\bar \X(s)$. Note that both $\bar\X(s)$ and $\bar \Y(s)$ are increasing, Lipschitz continuous with Lipschitz constant at most $2$, and
\begin{equation*}
\lim_{s\to \pm\infty}\bar \X(s)=\pm\infty= \lim_{s\to\pm\infty} \bar\Y(s),
\end{equation*}
which implies that to any given $(\xi, \eta)\in \bar{\mathcal{C}}$, there exists a unique $s\in \Real$ such that $\xi+ \eta=2s$ and $(\xi, \eta)=(\bar \X(s), \bar\Y(s))$. 

Next, introduce 
\begin{equation}\label{def:barx}
\bar x(s)=x_1(\bar\X(s))=x_2(\bar \Y(s)),
\end{equation}
then 
\begin{equation}\label{def:immx}
\bar x(s)=\sup\{x\in \Real\mid 2x + (\mu_0+\nu_0)((-\infty, x))<2s\},
\end{equation}
which implies that for $\mathcal{S}=\bar x(\mathcal{B})$, where  
\begin{equation}\label{def:B}
\mathcal{B}=\{s\mid \dot{\bar  {x}}(s)=0\},
\end{equation}
one has 
\begin{equation*}
(\mu_0+\nu_0)_{\mathrm{ac}}= (\mu_0+\nu_0)\vert_{\mathcal{S}^c} \quad \text{ and } \quad  (\mu_0+\nu_0)_{\mathrm{sing}}= (\mu_0+\nu_0)\vert_{\mathcal{S}}.
\end{equation*}
Furthermore, the points at which wave breaking occurs initially in Eulerian coordinates correspond to the set $\mathcal{B}$ in Lagrangian coordinates. However, $\bar J(s)= 2s- 2\bar x(s)$ in Lagrangian coordinates corresponds to $\mu_0+\nu_0$ in Eulerian coordinates and hence does not allow to distinguish between $\mu_0$ and $\nu_0$. On the other hand, we can use \eqref{x1x2} to split $\bar J(s)$ as follows
\begin{equation*}
\bar J(s)= \bar J_1(s)+\bar J_2(s)=J_1(\bar \X(s))+J_2(\bar \Y(s))
\end{equation*}
where 
\begin{equation}\label{J1J2}
J_1(X)=X-x_1(X)\quad \text{ and }\quad J_2(Y)=Y-x_2(Y), 
\end{equation}
which will be important later. Last, but not least we introduce 
\begin{equation*}
\bar U(s)= u_0(\bar x(s)). 
\end{equation*}
With all these definitions in place we can define the initial Lagrangian coordinates along the curve $\bar{\mathcal{C}}$ by 
\begin{equation*}
Z(\xi, \eta) = (t, x, U, J)(\xi, \eta) = (0, \bar x(s), \bar U(s), \bar J(s)) \quad \text{ for }(\xi, \eta)= (\bar\X(s), \bar\Y(s))\in \bar{\mathcal{C}}.
\end{equation*}

In analogy to the linear wave equation, we also need to know $Z_\xi$ and $Z_\eta$ on the curve $\bar{\mathcal{C}}$ before we can turn our attention towards the underlying system of partial differential equations. Here it is important, that we keep the properties, which we highlighted for the linear wave equation, 
i.e., following vertical and horizonal lines in the $(\xi, \eta)$ plane corresponds to following backward and forward characteristics, respectively, since 
\begin{equation}\label{def:crp}
x(\xi, \eta)= y(t(\xi, \eta), \xi)= z(t(\xi, \eta), \eta),
\end{equation}
and 
\begin{equation}\label{rel:tx}
x_\xi(\xi, \eta)= c(U)t_\xi(\xi, \eta) \quad \text{ and } \quad x_\eta(\xi, \eta)=- c(U)t_\eta(\xi, \eta),
\end{equation}
as well as add some additional properties, which are related to \eqref{meas:abs}, namely 
\begin{equation}\label{rel:seebreak}
(c(U)U_\xi)^2(\xi, \eta)=2x_\xi J_\xi (\xi, \eta) \quad \text{ and }\quad (c(U)U_\eta)^2(\xi, \eta)= 2x_\eta J_\eta(\xi, \eta),
\end{equation}
for all $(\xi, \eta)\in \bar{\mathcal{C}}$. 

Inspired by \eqref{Lagr:initdata}, one defines $Z_\xi(\xi, \eta)$ and $Z_\eta(\xi, \eta)$ for $(\xi, \eta)= (\bar \X(s), \bar \Y(s))\in \bar{\mathcal{C}}$ as follows
\begin{subequations}\label{init:data}
\begin{align} \nonumber
Z_\xi(\xi, \eta)& = (t_\xi, x_\xi, U_\xi, J_\xi)(\xi, \eta)\\
&  = ( \frac{1}{2c(\bar U(s))}x_1'(\bar\X(s)), \frac12 x_1'(\bar\X(s)), \frac{R(\bar x(s))}{2c(\bar U(s))}x_1'(\bar\X(s)), J_1'(\bar\X(s)))\\ \nonumber
Z_\eta(\xi, \eta)& = (t_\eta, x_\eta, U_\eta, J_\eta)(\xi, \eta)\\ 
& = (-\frac1{2c(\bar U(s))} x_2'(\bar\Y(s)), \frac12 x_2'(\bar\Y(s)), - \frac{S(\bar x(s))}{2c(\bar U(s))} x_2'(\bar\Y(s)), J_2'(\bar \Y(s))). 
\end{align}
\end{subequations}
Here it is important to note that wave breaking occurs initially at all those points $(\xi, \eta)=(\bar \X(s), \bar \Y(s))\in \bar{\mathcal{C}}$ such that either $(x_\xi, U_\xi)(\xi, \eta)=(0,0)$ or $(x_\eta, U_\eta)(\xi, \eta)=(0,0)$, since $\mathcal{B}$, given by \eqref{def:B}, contains all points for which wave breaking occurs initially, and $\bar x(s)$ is related to $x_1(X)$ and $x_2(Y)$ by \eqref{def:barx}. 

 To compute $(t,x,U,J)$ on all of $\mathbb{R}^2$, in the next step, one uses a reformulation of the nonlinear variational wave equation in Lagrangian coordinates, which is given by \begin{subequations}\label{eq:Lagr}
\begin{align}
t_{\xi, \eta}& = -\frac{c'(U)}{2c(U)}(t_\xi U_\eta+ t_\eta U_\xi), \\ \label{eq:Lagr2}
x_{\xi, \eta}&= \frac{c'(U)}{2c(U)}(x_\xi U_\eta + x_\eta U_\xi),\\ \label{eq:Lagr3}
U_{\xi, \eta} & = \frac{c'(U)}{2c^3(U)} (x_\xi J_\eta+ x_\eta J_\xi)-\frac{c'(U)}{2c(U)} U_\xi U_\eta,\\
J_{\xi, \eta}& = \frac{c'(U)}{2c(U)} (J_\xi U_\eta + J_\eta U_\xi).
\end{align}
\end{subequations}
In \cite{HR} it is shown that the above system has a unique solution in a rather complicated space. For us only some properties satisfied by the solutions are of particular interest. The triplet $(Z, Z_\xi, Z_\eta)$ belongs to the following space,
\begin{equation*}
Z\in [W^{1, \infty}(\mathbb{R}^2)]^4,
\end{equation*}
 for almost every $\xi\in \Real$, 
\begin{equation*}
Z_\xi (\xi, \cdot)=(t_\xi, x_\xi, U_\xi, J_\xi)(\xi, \cdot)  \in [W^{1, \infty}(\Real)]^4, 
\end{equation*}
and for almost every $\eta\in \Real$,
\begin{equation*}
Z_\eta(\cdot, \eta)= (t_\eta, x_\eta, U_\eta, J_\eta)(\cdot, \eta) \in [W^{1, \infty}(\Real)]^4.
\end{equation*}
Moreover, observe  that by \eqref{x1x2}, \eqref{J1J2}, and \eqref{init:data} 
\begin{equation}\label{prop:G}
(2x_\xi+J_\xi)(\xi, \eta)=1 \quad \text{and} \quad (2x_\eta+J_\eta)(\xi, \eta)=1 \quad \text{for all }(\xi, \eta)\in \bar{\mathcal{C}}.
\end{equation}
Although these relations are not preserved by the system of differential equations \eqref{eq:Lagr}, a slightly weaker result holds. Namely, given $L>0$, let 
\begin{equation}\label{def:SL}
S_L=\{(\xi, \eta)\in \Real^2\mid \mathrm{dist}( (\xi, \eta), \bar{\mathcal{C}})<L\}.
\end{equation}
Then, there exist $C_1$ and $C_2$, dependent on $L$, such that we have for all $(\xi,\eta) \in S_L$ \begin{equation}\label{prop:rel}
0<C_1< (2x_\xi +J_\xi)(\xi, \eta) <C_2 \quad \text{and} \quad 0<C_1<(2x_\eta+J_\eta)(\xi,\eta)<C_2.
\end{equation} 
Furthermore, one has 
\begin{equation}\label{asymp:J1}
J(\xi, \eta) \to 0  \quad \text{ as }\quad  \xi+\eta\to -\infty,
\end{equation}
and 
\begin{equation}\label{asymp:J2}
J(\xi, \eta) \to (\mu_0+ \nu_0)(\Real) \quad \text{ as }\quad  \xi+\eta\to \infty,
\end{equation} 

Observe that \eqref{prop:rel} states that if $x_\xi(\bar \xi, \eta)$ tends to $0$ as $(\bar \xi, \eta) \to (\bar \xi, \bar \eta)$, then $J_\xi(\bar \xi, \bar \eta)$ is strictly positive, and hence by \eqref{rel:seebreak}, we have that 
\begin{equation*}
\left(c(U)\frac{U_\xi }{x_\xi}\right)^2(\bar \xi, \eta)= 2\frac{J_\xi}{x_\xi}(\bar \xi, \eta) \to \infty \quad \text{as }(\bar \xi, \eta) \to (\bar \xi, \bar \eta).
\end{equation*}
Following the same lines of reasoning, we have that if  $x_\eta(\xi, \bar \eta)$ tends to $0$ as $(\xi, \bar \eta)\to (\bar \xi, \bar \eta)$, then 
\begin{equation*}
\left(c(U)\frac{U_\eta }{x_\eta}\right)^2(\xi, \bar \eta)= 2\frac{J_\eta}{x_\eta}(\xi, \bar \eta) \to \infty \quad \text{as }(\xi, \bar \eta) \to (\bar \xi, \bar \eta).
\end{equation*}

Or in other words, if wave breaking occurs at a point $(\bar \xi, \bar \eta)$ in Lagrangian coordinates, which happens if either $x_\xi(\bar \xi, \eta)$ or $x_\eta(\xi, \bar \eta)$ tend to $0$, then 
\begin{equation}\label{rel:LagrBreak}
\left(\frac{U_\xi }{x_\xi}\right)^2(\bar \xi, \eta) \quad \text{ or } \quad \left(\frac{U_\eta }{x_\eta}\right)^2(\xi, \bar \eta) \quad \text{tend to } \infty.
\end{equation}
Furthermore, by \eqref{rel:tx}
\begin{equation}\label{signt}
t_\xi(\xi, \eta) \geq 0 \quad \text{ and }\quad t_ \eta(\xi, \eta) \leq 0,
\end{equation}
which means that the connected set 
\begin{equation*}
E_{T}=\{(\xi, \eta)\in \Real^2\mid t(\xi, \eta)=T\}
\end{equation*} 
need not be a curve in $\Real^2$. In fact, it may consist of the union of a curve with a countable number of rectangles. Furthermore, for $T>0$, the set $E_T$ will lie below $\bar{\mathcal{C}}$. Thus, to go back from Lagrangian to Eulerian coordinates at time $T$, one needs to extract a curve $\mathcal{C}_T\subset E_T$. Since this curve is in general not unique, one makes a specific choice. Namely, 
\begin{equation*}
\mathcal{C}_T= \{(\X(s), \Y(s))\mid s\in \Real\},
\end{equation*}
where
\begin{equation*}
\X(s)=\sup \{\xi\in \Real\mid t(\xi',2s-\xi')<T \quad \text{ for all } \xi'<\xi\},
\end{equation*}
and $\Y(s)=2s-\X(s)$. 

Through the values of $U(\xi, \eta)$ along $\mathcal{C}_T$, we can define the function $u(T,X)$ as follows
\begin{equation*}
u(T,X)= U(\X(s), \Y(s)) \quad \text{ if } x(\X(s), \Y(s))= X.
\end{equation*}
For the functions $R$ and $S$, we can use, in analogy to \eqref{backtoEuler}, that 
\begin{align*}
R(T, X) x_\xi(\X(s), \Y(s))&=c(u(T,X))U_\xi(\X(s), \Y(s)) \\& \quad \text{ for any } s \text{ such that } x(\X(s), \Y(s))= X \text{ and } \dot \X(s)>0, 
\end{align*}
and 
\begin{align*}
S(T,X)x_\eta(\X(s), \Y(s))& =- c(u(T,X))U_\eta(\X(s), \Y(s))\\
& \quad \text{ for any } s \text{ such that } x(\X(s), \Y(s))= X \text{ and } \dot \Y(s)>0 .
\end{align*}

In view of \eqref{rel:LagrBreak}, one has that if $x_\xi (\bar \xi , \eta)  \to 0$ as $(\bar \xi, \eta) \to (\bar \xi, \bar \eta)\in \mathcal{C}_T$, then 
\begin{equation*}
R^2(\bar t,\bar x)\to \infty  \quad \text{ as } (\bar t, \bar x)=(t, x)(\bar \xi , \eta) \to (t,x)(\bar \xi, \bar \eta)=(T,X).
\end{equation*}
If, on the other hand, $x_\eta ( \xi , \bar \eta)  \to 0$ as $(\xi, \bar \eta) \to (\bar \xi, \bar \eta)\in \mathcal{C}_T$, then 
\begin{equation*}
S^2(\bar t,\bar x)\to \infty  \quad \text{ as } (\bar t, \bar x)=(t, x)(\bar \xi , \eta) \to (t,x)(\bar \xi, \bar \eta)=(T,X).
\end{equation*}
This means that if wave breaking occurs in Lagrangian coordinates, which corresponds to $x_\xi=0$ or $x_\eta=0$, then either $R^2$ or $S^2$ in Eulerian coordinates blow up. 

To complete the tuplet $(u,R,S, \mu, \nu)$, it remains to extract the measures $\mu$ and $\nu$. Therefore, introduce $x(s)= x(\X(s), \Y(s))$ for all $s\in \Real$. Then, 
\begin{equation*}
\mu= x_{\#}(J_\xi (\X(s), \Y(s))\dot \X(s)ds)
\end{equation*}
and 
\begin{equation*}
\nu=x_{\#} (J_\eta(\X(s), \Y(s)) \dot \Y(s)ds).
\end{equation*}
Here we want to highlight that $\mu$ and $\nu$ are positive Radon measure, since $Z_\xi$ and $Z_\eta$ not only satisfy  \eqref{rel:tx} and \eqref{signt}, but also 
\begin{equation*}
J_\xi(\xi, \eta)\geq 0 \quad \text{ and } \quad J_ \eta(\xi, \eta)\geq 0,
\end{equation*}
which are conserved in the following sense 
\begin{equation}\label{cons:meas}
(\mu+\nu)(T, \Real)=(\mu_0+\nu_0)(\Real).
\end{equation}
Furthermore, one has as earlier, for $\tilde{S}=x(\tilde{\mathcal{B}})$, where $\tilde{\mathcal{B}}= \{s\mid \dot x(s)=0\}$, that 
\begin{equation*}
(\mu+\nu)(T)_{\mathrm{ac}}= (\mu+\nu)(T)\vert_{\tilde {\mathcal{S}}^c} \quad \text{ and }\quad (\mu+\nu)(T)_{\mathrm{sing}}= (\mu+\nu)(T)\vert_ {\tilde {\mathcal{S}}}.
\end{equation*}

Last but not least we want to highlight and prove one property of the solution $u(t,x)$, which is essential in the next sections.

\begin{lemma}\label{lem:hol}
The function $u(t,x)$ is globally H{\"o}lder continuous, i.e., there exists a constant $D$, dependent on $\kappa$ and $(\mu_0+\nu_0)(\Real)$, such that 
\begin{equation*}
\vert u(t_1,x_1)-u(t_2,x_2)\vert \leq D\sqrt{\vert t_2-t_1\vert + \vert x_2-x_1\vert } \quad \text{ for all } (t_1,x_1), (t_2,x_2)\in\mathbb{R}^2.
\end{equation*}
\end{lemma}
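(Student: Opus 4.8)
The plan is to carry the estimate out in Lagrangian coordinates, where the energy relation \eqref{rel:seebreak} and the sign information \eqref{signt} give good control on the oscillation of $U$. Write $E_0=(\mu_0+\nu_0)(\Real)$. First I would establish a one-variable estimate along a single characteristic. At fixed $\eta$, using $(c(U)U_\xi)^2=2x_\xi J_\xi$ from \eqref{rel:seebreak}, the Cauchy--Schwarz inequality, and $1/\kappa\le c\le\kappa$ from \eqref{cond:c}, one gets
\[
\abs{U(\xi_2,\eta)-U(\xi_1,\eta)}\le\int \frac{\sqrt{2x_\xi J_\xi}}{c(U)}\,d\xi\le \sqrt{2}\,\kappa\Big(\int x_\xi\,d\xi\Big)^{1/2}\Big(\int J_\xi\,d\xi\Big)^{1/2},
\]
the integrals being taken over the $\xi$-interval between $\xi_1$ and $\xi_2$. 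Here $x_\xi=c(U)t_\xi\ge0$ by \eqref{rel:tx} and \eqref{signt}, and $J_\xi\ge0$, so the first integral equals $\abs{x(\xi_2,\eta)-x(\xi_1,\eta)}$, while the second is at most $E_0$ because $J$ is nondecreasing in $\xi$ and ranges in $[0,E_0]$ by \eqref{asymp:J1}--\eqref{asymp:J2}. Thus $\abs{U(\xi_2,\eta)-U(\xi_1,\eta)}\le\sqrt{2}\kappa\sqrt{E_0}\,\abs{x(\xi_2,\eta)-x(\xi_1,\eta)}^{1/2}$, and the same bound holds in the $\eta$-variable using the second identity in \eqref{rel:seebreak} and $x_\eta=-c(U)t_\eta\ge0$.

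Next I would connect two arbitrary Eulerian points. Given $(t_1,x_1)$ and $(t_2,x_2)$, pick Lagrangian preimages $(\xi_1,\eta_1)$ and $(\xi_2,\eta_2)$ and introduce the corner point $(\xi_2,\eta_1)$, with Eulerian image $(t_*,x_*)$. The segment from $(\xi_1,\eta_1)$ to $(\xi_2,\eta_1)$ runs along a forward characteristic (fixed $\eta$) and the segment from $(\xi_2,\eta_1)$ to $(\xi_2,\eta_2)$ along a backward characteristic (fixed $\xi$), so applying the one-variable estimate to each and using the triangle inequality gives
\[
\abs{u(t_1,x_1)-u(t_2,x_2)}\le\sqrt{2}\kappa\sqrt{E_0}\Big(\abs{x_*-x_1}^{1/2}+\abs{x_2-x_*}^{1/2}\Big).
\]
It then remains to bound $\abs{x_*-x_1}$ and $\abs{x_2-x_*}$ by $\abs{t_2-t_1}+\abs{x_2-x_1}$.

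This last bound is the geometric core. Along the forward characteristic through $(t_1,x_1)$ both $t$ and $x$ are monotone in $\xi$ (by \eqref{signt} and $x_\xi\ge0$) with $dx/dt=c(U)\in[1/\kappa,\kappa]$, so $x_*-x_1$ and $t_*-t_1$ share the same sign and $\frac1\kappa\abs{t_*-t_1}\le\abs{x_*-x_1}\le\kappa\abs{t_*-t_1}$; along the backward characteristic through $(t_2,x_2)$ one has $dx/dt=-c(U)$, so $x_*-x_2$ and $t_*-t_2$ have opposite signs with the analogous comparability. A short case distinction according to whether $t_*$ lies in the future of both times, in the past of both, or between them then yields $\abs{x_*-x_1},\abs{x_2-x_*}\le\kappa(\abs{t_2-t_1}+\abs{x_2-x_1})$. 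Substituting this into the previous display gives the claim with $D=2\sqrt{2}\,\kappa^{3/2}\sqrt{E_0}$, which depends only on $\kappa$ and $(\mu_0+\nu_0)(\Real)$.

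I expect the main difficulty to be this geometric step: controlling the corner point $(t_*,x_*)$ --- the intersection of the forward characteristic through the first point with the backward characteristic through the second --- within Euclidean distance of order $\abs{t_2-t_1}+\abs{x_2-x_1}$ of both endpoints. This works precisely because the two characteristic slopes are bounded away from $0$ and $\infty$ and have opposite signs, which is what \eqref{cond:c}, \eqref{rel:tx}, and \eqref{signt} provide. A secondary point to address is that \eqref{rel:seebreak}, stated on $\bar{\mathcal{C}}$, is in fact propagated to all of $\Real^2$ by the Lagrangian system \eqref{eq:Lagr}, and that every point of $\Real^2$ has a Lagrangian preimage; both are properties of the construction in \cite{HR} that may be invoked.
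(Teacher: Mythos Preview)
Your proof is correct and relies on the same ingredients as the paper's---the energy relation \eqref{rel:seebreak}, Cauchy--Schwarz, the global bound on $J$ from \eqref{asymp:J1}--\eqref{asymp:J2}, and the speed bounds \eqref{cond:c}, \eqref{rel:tx}, \eqref{signt}---but you organize them differently. The paper first manufactures an auxiliary point $(\bar\xi,\bar\eta)$ on the diagonal $\xi+\eta=\xi_2+\eta_2$ with $t(\bar\xi,\bar\eta)=t_1$, then splits $u(t_1,x_1)-u(t_2,x_2)$ into a purely spatial piece at fixed time $t_1$, handled in Eulerian coordinates via $2c(u)u_x=R-S$, and a time-change piece from $(\bar\xi,\bar\eta)$ to $(\xi_2,\eta_2)$, handled in Lagrangian coordinates via the corner $(\bar\xi,\eta_2)$ with the $J$-integral bounded by $E_0$ and the $t$-integral bounded by $|t_2-t_1|$. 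Your route is more uniform: you go straight to the Lagrangian corner $(\xi_2,\eta_1)$, apply the \emph{same} one-variable Lagrangian estimate on both legs (with the $x$-integral rather than the $t$-integral), and then convert the two $x$-increments into $|t_2-t_1|+|x_2-x_1|$ by the transversality of the two characteristic families. Your case distinction for that last step is sound: when $t_*$ lies between $t_1$ and $t_2$ the time increments add up to $|t_2-t_1|$, and when $t_*$ lies outside that interval the sign correlations force both $x_*-x_1$ and $x_2-x_*$ to have the same sign, so their absolute values add up to $|x_2-x_1|$. The upshot is a single explicit constant $D=2\sqrt{2}\,\kappa^{3/2}\sqrt{E_0}$. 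The two secondary points you flag (global validity of \eqref{rel:seebreak} and surjectivity of $(\xi,\eta)\mapsto(t,x)$) are indeed used implicitly in the paper's own proof and are supplied by \cite{HR}.
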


\begin{proof}
Let $(t_1,x_1)$, $(t_2,x_2)\in \mathbb{R}^2$ such that $t_1<t_2$. Then there exist $(\xi_1, \eta_1)$ and $(\xi_2, \eta_2)$ in $\mathbb{R}^2$, not necessarily unique, such that 
\begin{equation*}
(t_i, x_i, u(t_i, x_i))= (t,x,U)(\xi_i, \eta_i) \quad i\in \{1,2\}.
\end{equation*}
Furthermore, there exists a point $(\bar \xi, \bar \eta)\in \mathbb{R}^2$ such that 
\begin{equation}\label{cond:3pun}
\bar \xi + \bar \eta=B= \xi_2+ \eta_2 \quad \text{ and } \quad (t, x, U)(\bar \xi, \bar \eta)= (t_1, \bar x, u(t_1, \bar x)),
\end{equation}
which implies that 
\begin{align*}
\vert x_1- \bar x\vert & \leq \vert x_1-x_2\vert + \vert x_2-\bar x\vert= \vert x_1-x_2\vert +\vert  x(\xi_2,\eta_2)- x(\bar\xi, \bar \eta)\vert.
\end{align*}
By \eqref{cond:c}, \eqref{rel:tx}, \eqref{signt}, and \eqref{cond:3pun}, the last term on the right hand side satisfies
\begin{align*}
\vert x(\xi_2, \eta_2)- x(\bar \xi, \bar \eta)\vert & = \vert \int_{\bar \xi}^{\xi_2} (x_\xi-x_\eta)(\tilde \xi, B-\tilde \xi ) d\tilde \xi \vert\\
&  \leq \kappa\vert \int_{\bar \xi}^{\xi_2} (t_\xi-t_\eta)(\tilde \xi, B-\tilde \xi) d\tilde \xi\vert \\
 & = \kappa \vert t(\xi_2, \eta_2)- t(\bar \xi, \bar \eta) \vert = \kappa \vert t_2-t_1\vert
\end{align*}
and 
\begin{equation*}
\vert x_1-\bar x\vert \leq \kappa\vert t_2- t_1\vert + \vert x_2-x_1\vert.
\end{equation*} 

To obtain the anticipated H{\"o}lder estimate, write 
\begin{equation}\label{est:hol1}
\vert u(t_1, x_1)-u(t_2, x_2)\vert  \leq \vert u(t_1, x_1)-u(t_1, \bar x)\vert+ \vert u(t_1, \bar x)- u(t_2, x_2)\vert.
\end{equation}
For the first term on the right hand side, observe that by Definition~\ref{def:eul}
\begin{equation*}
2c(u)u_x(t_1, x)= (R-S)(t_1, x) \quad \text{ for all } x\in \Real 
\end{equation*}
and hence
\begin{align}\nonumber
\vert u(t_1, x_1)- u(t_1, \bar x)\vert & = \vert \int_{\bar x}^{x_1}  \frac{1}{2c(u)}(R-S)(t_1, \tilde x) d\tilde x\vert \\ \nonumber
&  \leq 2\kappa \sqrt{(\mu+\nu)(t_1, \Real)}\sqrt{\vert \bar x- x_1\vert }\\ \label{est:hol2}
& \leq  2\kappa\sqrt{(\mu_0+\nu_0)( \Real)} \sqrt{\kappa \vert t_2-t_1\vert +\vert x_2- x_1\vert },
\end{align}
where we also used \eqref{cons:meas}.

For the second term on the right hand side of \eqref{est:hol1}, note that since $t_1<t_2$, \eqref{cond:3pun} and \eqref{signt} imply that 
\begin{equation*} 
\bar \xi <\xi_2, \quad \bar \eta>\eta_2, \quad \text{ and } \quad t_1\leq t(\bar \xi, \eta_2)\leq t_2.
\end{equation*}
Combining next the above relations with \eqref{rel:seebreak}, \eqref{rel:tx}, \eqref{signt}, \eqref{asymp:J1}, and \eqref{asymp:J2} yields 
\begin{align*}
\vert U(\bar \xi, \bar \eta)- U(\bar \xi, \eta_2)\vert&  \leq \sqrt{2\kappa} \int_{ \eta_2}^{\bar \eta} \sqrt{(-t_\eta J_\eta)(\bar \xi , \tilde \eta)} d\tilde \eta\\
& \leq \sqrt{2\kappa} \sqrt{\vert J(\bar \xi, \bar\eta)- J(\bar \xi, \eta_2)\vert } \sqrt{\vert t(\bar \xi, \eta_2)- t(\bar \xi , \bar \eta)\vert } \\\
& \leq \sqrt{2\kappa}  \sqrt{(\mu_0+\nu_0)(\Real)}\sqrt{\vert t_2-t_1\vert } 
\end{align*}
and following the same lines 
\begin{equation*}
\vert U(\bar \xi, \eta_2)- U(\xi_2, \eta_2)\vert \leq \sqrt{2\kappa} \sqrt{(\mu_0+\nu_0)(\Real)}\sqrt{\vert t_2-t_1\vert }.
\end{equation*}
Thus 
\begin{align*}
\vert u(t_1, \bar x)- u(t_2, x_2)\vert& \leq \vert U(\bar \xi, \bar \eta)- U(\bar \xi, \eta_2)\vert + \vert U(\bar \xi, \eta_2)- U(\xi_2, \eta_2)\vert \\
& \leq \sqrt{8\kappa} \sqrt{(\mu_0+\nu_0)(\Real)} \sqrt{\vert t_2-t_1\vert},
\end{align*}
which combined with \eqref{est:hol1} and \eqref{est:hol2} finishes the proof. 
\end{proof}

\section{Prediction of wave breaking}

In the last section, we outlined that wave breaking occurs along backward characteristics, which correspond to vertical lines in the $(\xi, \eta)$ plane, if $x_ \xi(\xi, \eta)$ tends to zero, while wave breaking occurs along forward characteristics, which correspond to horizontal lines in the $(\xi, \eta)$ plane, if $x_\eta(\xi, \eta)$ tends to zero. In this section we will present some criteria, which allow to predict whether or not wave breaking either will take place in the near future or occurred recently. Again, we will distinguish between following backward and forward characteristics to differ between $R(t,x)=(u_t+c(u)u_x)(t,x)$ and $S(t,x)=(u_t-c(u)u_x)(t,x)$ becoming unbounded pointwise. 

\subsection{Wave breaking along backward characteristics}\label{sub:BC} Let $(\xi_0, \eta_0)\in \Real^2$, which corresponds to the point $(0,\bar x)= (t(\xi_0, \eta_0), x(\xi_0, \eta_0))$ in Eulerian coordinates, such that $x_\xi (\xi_0, \eta_0)>0$. Then wave breaking occurs along the backward characteristic $y(t,\xi_0)$ if the function $R(t,y(t,\xi_0))=(u_t+c(u)u_x)(t,y(t,\xi_0))$ becomes unbounded, which, as highlighted in the last section, is equivalent to $x_\xi(\xi_0, \eta)$ tending to $0$ along the vertical line $\{(\xi_0, \eta)\mid \eta\in \Real\}$ in the $(\xi, \eta)$ plane. A natural question that arises in this context is the following: Does there exist a point $(\xi_0, \bar \eta)$ close to $(\xi_0, \eta_0)$ such that 
$x_\xi (\xi_0, \eta)\to 0$ as $\eta\to \bar \eta$? If so, wave breaking either occurred recently or will take place in the near future and it then remains to estimate the actual breaking time. 

As a starting point we consider the function 
\begin{equation}\label{def:f}
  f(\xi_0, \eta)= \sqrt{\frac{c(U(\xi_0, \eta_0))}{c(U(\xi_0, \eta))}}x_\xi (\xi_0, \eta). 
\end{equation}
Here two observations are important. First, due to \eqref{cond:c}, $f(\xi_0,\eta)\to 0$ as $\eta\to \bar \eta$ is equivalent to $x_\xi(\xi_0, \eta) \to 0$ as $\eta\to \bar \eta$. Second, $f(\xi_0, \eta)$ satisfies, in contrast to $x_\xi(\xi_0, \eta)$, cf. \eqref{eq:Lagr}, a homogeneous linear differential equation. Namely,
\begin{equation*}
  f_\eta(\xi_0, \eta)= \frac{c'(U)}{2c(U)}\frac{U_\xi}{x_\xi}x_\eta f(\xi_0, \eta),
\end{equation*}
which implies that 
\begin{equation*}
  f(\xi_0, \eta)= f(\xi_0, \eta_0) e^{\int_{\eta_0}^\eta \frac{c'(U)}{2c(U)} \frac{U_\xi}{x_\xi} x_\eta(\xi_0, \tilde \eta) d\tilde \eta}.
\end{equation*}
By \eqref{def:f} $f(\xi_0, \eta_0)>0$, since, by assumption, $x_\xi(\xi_0, \eta_0)>0$ and $c(x)$ satisfies \eqref{cond:c}, and hence $f(\xi_0, \eta)$ can only tend to zero as $\eta\to \bar \eta$ if 
\begin{equation}\label{asymp}
  \int_{\eta_0}^\eta \frac{c'(U)}{2c(U)} \frac{U_\xi}{x_\xi} x_\eta(\xi_0, \tilde \eta) d\tilde \eta \rightarrow -\infty.
\end{equation}
Thus our goal is to establish a criterion, which guarantees that the above integral tends to $-\infty$ for $\bar \eta$ close to $\eta_0$, which can be formulated in Eulerian coordinates. The key element of our argument is the function
\begin{equation} \label{def:h}
  h(\xi_0, \eta)= c(U)\frac{U_\xi}{x_\xi}(\xi_0, \eta),
\end{equation}
since \eqref{asymp} can only hold if $c'(U)h(\xi_0, \eta)$ becomes unbounded, due to \eqref{cond:c} and \eqref{prop:rel}. Moreover, by \eqref{cond:cder} $c'$ is bounded but can attain the value $0$, so that 
$c'(U)h(\xi_0, \eta)$ can only become unbounded if $h(\xi_0, \eta)$ becomes unbounded. In a next step, we will therefore analyse when $h(\xi_0, \eta)$ can become unbounded by having a closer look at the underlying first order differential equation, which is given by 
\begin{equation}\label{difflig:h}
  h_\eta(\xi_0, \eta)= \alpha(\xi_0, \eta)+ \gamma h^2(\xi_0, \eta),
\end{equation}
where 
\begin{equation*}
 \alpha(\xi_0, \eta)= \frac{c'(U)}{2c^2(U)}J_\eta(\xi_0, \eta) 
\end{equation*}
and 
\begin{equation}\label{def:tigamma}
  \gamma(\xi_0, \eta)= -\frac{c'(U)}{4c^2(U)}x_\eta(\xi_0, \eta).
\end{equation}
Note that by \eqref{signt}, \eqref{rel:tx}, and \eqref{rel:seebreak}, the signs of $\alpha(\xi_0, \eta)$ and $-\gamma(\xi_0, \eta)$ coincide with the sign of $c'(U)(\xi_0, \eta)$

\vspace{0.2cm}
Assume that $\eta_0\leq  \eta$, which implies, using \eqref{signt}, that $t(\xi_0, \eta_0)\geq t(\xi_0, \eta)$ and hence will yield a criterion for recent wave breaking. Then \eqref{asymp} requires 
\begin{equation*}
c'(U)h(\xi_0, \eta) \to -\infty \quad \text{ as }\quad  \eta\to \bar \eta, 
\end{equation*}
for some $\bar \eta \geq \eta_0$. Therefore, we will distinguish two cases dependent on the signs of $c'(U)(\xi_0, \eta)$ and $h(\xi_0, \eta)$. 

{\it Case 1:}
 Assume there exists an interval $[\eta_0, \eta_1]$ such that 
\begin{equation*}
  h(\xi_0, \eta)<0 \quad \text{ and } \quad c'(U)(\xi_0, \eta)>0 \quad \text{ for all }\eta\in [\eta_0, \eta_1],
\end{equation*}
and hence 
\begin{equation}\label{cond:a1}
  \alpha(\xi_0, \eta) \geq 0 \quad \text{ and }\quad  \gamma(\xi_0, \eta) \leq 0 \quad \text{ for all } \eta\in [\eta_0, \eta_1].
\end{equation}
Then \eqref{difflig:h} implies 
\begin{equation*}
  h(\xi_0,\eta_0)+ \int_{\eta_0}^\eta \gamma h^2(\xi_0, \tilde \eta) d\tilde \eta\leq  h(\xi_0, \eta)\leq a+ \int_{\eta_0}^\eta \gamma h^2(\xi_0, \tilde \eta) d\tilde \eta
\end{equation*}
for any real number $a$ which satisfies 
\begin{equation}\label{cond:a3}
  h(\xi_0, \eta_0)+ \int_{\eta_0}^{\eta_1}  \alpha(\xi_0, \tilde \eta) d\tilde \eta\leq a<0.
\end{equation} 
Furthermore, the above integral inequality fulfills all the assumptions from Proposition~\ref{prop:ber} due to \eqref{cond:a1} and we end up with  
\begin{equation*}
  \frac{h(\xi_0, \eta_0)}{1- h(\xi_0, \eta_0)\int_{\eta_0}^\eta \gamma(\xi_0, \tilde \eta) d\tilde \eta}\leq h(\xi_0, \eta) \leq \frac{a}{1- a\int_{\eta_0}^\eta  \gamma(\xi_0, \tilde \eta) d\tilde \eta},
\end{equation*}
where $ a$ can be any real number which satisfies \eqref{cond:a3}.

{\it Case 2:} Assume there exists an interval $[\eta_0, \eta_1]$ such that 
\begin{equation*}
  h(\xi_0, \eta)>0 \quad \text{ and } \quad c'(U)(\xi_0, \eta)<0 \quad \text{ for all }\eta\in [\eta_0, \eta_1],
\end{equation*}
and hence 
\begin{equation*}
  \alpha(\xi_0, \eta) \leq 0 \quad \text{ and }\quad \gamma(\xi_0, \eta) \geq 0 \quad \text{ for all } \eta\in [\eta_0, \eta_1].
\end{equation*}
Then $\check h(\xi_0, \eta)= -h(\xi_0, \eta)$ satisfies the differential equation
\begin{equation*}
  \check h_\eta(\xi_0, \eta)= \check \alpha(\xi_0, \eta)+\check\gamma \check h^2(\xi_0, \eta), 
\end{equation*}
where 
\begin{equation*}
  \check \alpha(\xi_0, \eta)=- \alpha(\xi_0, \eta)\geq 0
\end{equation*}
 and 
\begin{equation*}
  \check \gamma(\xi_0, \eta)=- \gamma (\xi_0, \eta) \leq 0 
\end{equation*}
for all $\eta\in [\eta_0, \eta_1]$. This means $\check h(\xi_0, \eta)$ satisfies all the assumptions from Case 1 and 
we obtain 
\begin{equation*}
  \frac{\check h(\xi_0, \eta_0)}{1- \check h(\xi_0, \eta_0) \int_{\eta_0}^\eta \check \gamma(\xi_0, \tilde \eta) d\tilde \eta} \leq \check h(\xi_0, \eta)\leq \frac{\check a}{1- \check a \int_{\eta_0}^\eta \check \gamma (\xi_0, \tilde \eta) d\tilde \eta},
\end{equation*}
where $\check a$ can be any real number which satisfies
\begin{equation*}
  \check h(\xi_0, \eta_0)+ \int_{\eta_0}^{\eta_1} \check \alpha(\xi_0, \tilde\eta) d\tilde \eta\leq \check a<0.
\end{equation*}
Thus we end up with 
\begin{equation*}
  \frac{h(\xi_0, \eta_0)}{1-h(\xi_0, \eta_0)\int_{\eta_0}^\eta  \gamma(\xi_0, \tilde \eta) d\tilde \eta}\geq h(\xi_0, \eta) \geq \frac{a} {1- a \int_{\eta_0}^\eta  \gamma (\xi_0, \tilde \eta) d\tilde \eta},
\end{equation*}
where $a$ can be any real number such that 
\begin{equation}\label{cond:a4}
  0<a \leq h(\xi_0, \eta_0)+ \int_{\eta_0}^{\eta_1} \alpha (\xi_0, \tilde \eta) d\tilde \eta.
\end{equation}

To finally obtain an upper and a lower bound on \eqref{asymp}, note that 
\begin{equation*}
\frac{c'(U)}{2c(U)}\frac{U_\xi}{x_\xi}x_\eta(\xi_0, \eta)= -2\gamma h(\xi_0, \eta) \leq 0 \quad \text{ for  all } \eta \in [\eta_0, \eta_1],
\end{equation*}
and $-\gamma(\xi_0, \eta)$ and $h(\xi_0, \eta)$ have opposite sign. Thus, combining Case 1 and Case 2 yields  
\begin{align}\nonumber
  \int_{\eta_0}^\eta \frac{c'(U)}{2c(U)} \frac{U_\xi}{x_\xi} x_\eta(\xi_0, \tilde \eta)d \tilde \eta 
  & = -2 \int_{\eta_0}^\eta  \gamma h(\xi_0, \tilde \eta) d\tilde \eta\\ \nonumber
  & \geq -2 \int_{\eta_0}^\eta \frac{h(\xi_0, \eta_0) \gamma (\xi_0, \tilde \eta)}{1-h(\xi_0, \eta_0)\int_{\eta_0}^{\tilde \eta} \gamma (\xi_0, \hat \eta) d\hat \eta}d \tilde \eta\\ \nonumber
  & = 2 \ln\left(1- h(\xi_0, \eta_0) \int_{\eta_0}^\eta \gamma(\xi_0, \tilde \eta) d\tilde \eta\right)\\ \nonumber
  & = 2\ln\left(1+h(\xi_0, \eta_0)\int_{\eta_0}^\eta \frac14 \frac{c'(U)}{c^2(U)}x_\eta(\xi_0, \tilde\eta) d\tilde \eta\right)\\ \label{est:bound1}
  & = 2 \ln\left(1- h(\xi_0, \eta_0) \int_{\eta_0}^\eta \frac 14 \frac{c'(U)}{c(U)}t_\eta(\xi_0, \tilde \eta)d\tilde \eta \right),
\end{align}
where we used \eqref{def:tigamma}
and, following the same lines, 
\begin{equation}\label{est:bound2}
 \int_{\eta_0}^\eta \frac{c'(U)}{2c(U)} \frac{U_\xi}{x_\xi} x_\eta(\xi_0, \tilde \eta)d \tilde \eta  \leq  2\ln\left(1- a \int_{\eta_0}^\eta \frac14 \frac{c'(U)}{c(U)} t_\eta (\xi_0, \tilde \eta) d\tilde \eta\right).
\end{equation}
Comparing the above inequalities \eqref{est:bound1} and \eqref{est:bound2}, we observe that they have nearly the same form. To ensure that wave breaking takes place and to obtain an upper and a lower bound on the wave breaking time, we will bound the constant $a$ by a multiple of $h(\xi_0, \eta_0)$. This will be done next. 

A closer look at \eqref{cond:a3} and \eqref{cond:a4} reveals that $a$,  $h(\xi_0, \eta_0)$,  and $-\alpha(\xi_0, \eta)$ for $\eta\in [\eta_0, \eta_1]$ have the same sign. Thus, all constants $a$, which satisfy
\begin{equation}\label{cond:a5}
0<\vert a\vert \leq \vert h(\xi_0, \eta_0)\vert - \int_{\eta_0}^{\eta_1} \vert \alpha(\xi_0, \eta) \vert d\eta,
\end{equation}
are admissible. 
Furthermore, \eqref{cond:c}, \eqref{cond:cder}, \eqref{asymp:J1}, and \eqref{asymp:J2} imply that 
\begin{equation*}
\int_{\eta_0}^{\eta_1} \vert \alpha(\xi_0, \eta)\vert d\eta\leq \frac12 \lambda \kappa^2 (J(\xi_0, \eta_1)-J(\xi_0, \eta_0))\leq  \frac12 \lambda \kappa^2 (\mu_0+\nu_0)(\Real)
\end{equation*}
and \eqref{cond:a5} is satisfied for all constants $a$ such that 
\begin{equation}\label{cond:a6}
0<\vert a\vert \leq \vert h(\xi_0, \eta_0)\vert - \frac12 \lambda \kappa^2 (\mu_0+\nu_0)(\Real).
\end{equation}
This means in particular, that if there exists a constant $A\geq 1$ such that 
\begin{equation}\label{cond:HA}
\vert h(\xi_0, \eta_0)\vert = A\lambda \kappa^2 (\mu_0+\nu_0)(\Real),
\end{equation}
then the constant $a$ given by 
\begin{equation}\label{def:cona}
a= \left (1-\frac{1}{2A}\right) h(\xi_0, \eta_0), 
\end{equation}
will not only satisfy \eqref{cond:a6}, but also \eqref{cond:a3} and \eqref{cond:a4}, which require that $a$ and $h(\xi_0, \eta_0)$ have the same sign, and \eqref{est:bound2} rewrites as 
\begin{align}\label{est:bound33}
  \int_{\eta_0}^\eta \frac{c'(U)}{2c(U)}\frac{U_\xi}{x_\xi}x_\eta(\xi_0, \tilde \eta) d\tilde \eta
 \leq 2 \ln \left(1- \left(1-\frac1{2A}\right) h(\xi_0, \eta_0)\int_{\eta_0}^\eta \frac14 \frac{c'(U)}{c(U)} t_\eta(\xi_0, \tilde \eta) d\tilde \eta\right).
\end{align}
Recalling that $\eta\geq \eta_0$ and  \eqref{asymp}, which must be satisfied if wave breaking takes place along a backward characteristic, yields that wave breaking occurred for sure backward in time if there exists $\bar \eta>\eta_0$ such that 
 \begin{equation*}
1- \left(1-\frac1{2A}\right) h(\xi_0, \eta_0)\int_{\eta_0}^\eta \frac14 \frac{c'(U)}{c(U)} t_\eta(\xi_0, \tilde \eta) d\tilde \eta \to 0 \quad \text{ as } \quad \eta \to \bar \eta
 \end{equation*}
 and \eqref{est:bound33} and  \eqref{est:bound1} will provide a lower and an upper bound on the wave breaking time, respectively. 
 
\begin{theorem}[Wave breaking along backward characteristics - Part 1]\label{thm:wb1}
Given some initial data $(u_0, R_0, S_0, \mu_0, \nu_0)\in \mathcal{D}$, denote by $(u(t), R(t), S(t), \mu(t), \nu(t))$ the global conservative solution to the NVW equation at time $t$ and by $D$ the constant from Lemma~\ref{lem:hol}. If the following conditions are satisfied
\begin{itemize}
\item  $c'(u_0)R_0(\bar x)=c'(u_0)(u_{0,t}+c(u_0)u_{0,x})(\bar x)<0$, 
\item there exists $A\geq 1$ such that 
\begin{equation*}
  \vert R_0\vert (\bar x)= A \lambda \kappa^2 (\mu_0+ \nu_0)(\Real) \quad \text{ and }
\end{equation*}
\item 
$12 \bar \lambda^2D^2(1+\kappa) <(1-\frac{1}{2A})\frac{1}{\kappa} \vert c'(u_0)\vert^3\vert R_0\vert(\bar x)$,
\end{itemize}
then wave breaking occurred along the backward characteristic $y(t, \xi_0)$ with $y(0,\xi_0)=\bar x$ within the time interval 
 \begin{equation*}
[t_l, t_u]=[-\frac{3}{b},  - \frac{3}{5\tilde b}], 
\end{equation*}
where 
\begin{equation*}
b= \frac{1}{4\kappa}  \left(1- \frac{1}{2A}\right)\vert c'(u_0)R_0\vert (\bar x)< \frac{1}{4}\kappa \vert c'(u_0) R_0\vert ( \bar x)= \tilde b .
\end{equation*}
\end{theorem}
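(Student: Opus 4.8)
The plan is to translate the three hypotheses into the Lagrangian blow-up criterion \eqref{asymp} developed above, and then to read off the breaking time from the two-sided bounds \eqref{est:bound1} and \eqref{est:bound33}. First I would identify the starting data: along the backward characteristic through $(\xi_0,\eta_0)$ one has $h(\xi_0,\eta_0)=c(U)U_\xi/x_\xi(\xi_0,\eta_0)=R_0(\bar x)$ by the $R$-extraction relation, so the second hypothesis is exactly \eqref{cond:HA}, while the sign hypothesis $c'(u_0)R_0(\bar x)<0$ says that $c'(U)(\xi_0,\eta_0)$ and $h(\xi_0,\eta_0)$ have opposite signs. Depending on the sign of $R_0(\bar x)$ this places us in Case 1 ($h<0$, $c'(U)>0$) or Case 2 ($h>0$, $c'(U)<0$); since the two are interchanged by $\check h=-h$, I would carry out the argument in Case 1 and invoke the symmetry for Case 2. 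With $a$ chosen as in \eqref{def:cona}, the admissibility conditions hold, so \eqref{est:bound1} and \eqref{est:bound33} are both available as soon as the sign pattern persists along the characteristic.

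The crux is to guarantee that $c'(U(\xi_0,\eta))$ keeps the sign of $c'(u_0)$ on the whole $\eta$-interval up to the (still to be located) breaking point. Moving along $\xi=\xi_0$, the relations \eqref{rel:tx} and \eqref{signt} give $|x(\xi_0,\eta)-\bar x|\le \kappa\,|t(\xi_0,\eta)|$, so the global H\"older estimate of Lemma~\ref{lem:hol} yields $|U(\xi_0,\eta)-u_0(\bar x)|\le D\sqrt{(1+\kappa)\,|t(\xi_0,\eta)|}$, and then \eqref{cond:cder} gives $|c'(U)-c'(u_0)|\le \bar\lambda D\sqrt{(1+\kappa)\,|t(\xi_0,\eta)|}$. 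Requiring this deviation to stay below $|c'(u_0)|$ for all times down to $t_l=-3/b$ is, after substituting $b=\tfrac1{4\kappa}(1-\tfrac1{2A})|c'(u_0)R_0|(\bar x)$, precisely the third hypothesis $12\bar\lambda^2D^2(1+\kappa)<(1-\tfrac1{2A})\tfrac1\kappa|c'(u_0)|^3|R_0|(\bar x)$. Thus the sign of $c'(U)$, hence of $\alpha$ and $\gamma$, is controlled globally by H\"older continuity alone, independently of whether breaking has yet occurred, so the case analysis and its Riccati comparison via Proposition~\ref{prop:ber} remain valid all the way to $t_l$.

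With the bounds secured, the last step is to estimate the integral $\int_{\eta_0}^\eta \tfrac14\tfrac{c'(U)}{c(U)}t_\eta\,d\tilde\eta$ appearing in the denominators of \eqref{est:bound1} and \eqref{est:bound33}. Writing $\int_{\eta_0}^\eta t_\eta\,d\tilde\eta=t(\xi_0,\eta)$ and replacing $c'(U)/c(U)$ by its value at $\eta_0$ up to the error controlled above, one sees that the denominator $1-h(\xi_0,\eta_0)\int\tfrac14\tfrac{c'(U)}{c(U)}t_\eta$ in \eqref{est:bound1} stays strictly positive for $t(\xi_0,\eta)\in(-\tfrac{3}{5\tilde b},0]$, so by the lower bound \eqref{est:bound1} no breaking occurs for those times; conversely the denominator $1-a\int\tfrac14\tfrac{c'(U)}{c(U)}t_\eta$ in \eqref{est:bound33} must vanish at some time no later than $t_l=-3/b$, which via \eqref{asymp} forces $x_\xi(\xi_0,\eta)\to0$ and hence $R$ to become unbounded along the backward characteristic. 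Here the bounds \eqref{cond:c} supply the factors of $\kappa$ distinguishing $b$ from $\tilde b$, and the numerical slack in $3$ and $\tfrac15$ is exactly what absorbs the $\bar\lambda D^2$ error terms under the third hypothesis. Combining the two one-sided conclusions pins the breaking time in $[t_l,t_u]$.

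I expect the main obstacle to be the quantitative bookkeeping of the third step: one must propagate the H\"older-type deviation of $c'(U)$ through the integral so that the perturbed Riccati denominators still vanish, and do so with enough room that the vanishing time is trapped between $-3/b$ and $-3/(5\tilde b)$ rather than merely shown to be finite. Getting the constants $3$ and $\tfrac15$ to come out consistently with the single clean condition in the third bullet is the delicate part; everything else is sign-tracking already prepared by Cases 1 and 2.
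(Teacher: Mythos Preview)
Your outline is correct and matches the paper's proof essentially step for step: identify $h(\xi_0,\eta_0)=R_0(\bar x)$, reduce to Case~1 by symmetry, use Lemma~\ref{lem:hol} to control $c'(U)$ along the characteristic, and then read off the breaking window from the two denominators in \eqref{est:bound1} and \eqref{est:bound33}. Your observation that the third hypothesis is exactly the condition $3/b<T$ (with $T$ the time at which the H\"older perturbation could flip the sign of $c'$) is precisely what the paper obtains, though the paper routes it through the equivalent form $4b^3>27a^2$ for the cubic $f(x)=-ax^3+bx^2-1$ in $x=\sqrt{-t}$.

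The one place where you are vague and the paper is explicit is the ``quantitative bookkeeping'' you flag at the end. The paper integrates the H\"older bound to get $\int_{\eta_0}^\eta\tfrac{c'(U)}{c(U)}t_\eta\,d\tilde\eta\le\tfrac1\kappa\bigl(c'(u_0)t+\tfrac23\bar\lambda D\sqrt{1+\kappa}\,(-t)^{3/2}\bigr)$, then treats the vanishing of each denominator as a root problem for a cubic in $\sqrt{-t}$. For the $a$-denominator this is $f(x)=-ax^3+bx^2-1$, and the comparison $g(x)=\tfrac13bx^2-1\le f(x)$ on $[0,\tfrac{2b}{3a}]$ pins the relevant root below $\hat z_1=\sqrt{3/b}$, giving $t_l=-3/b$. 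For the $h_0$-denominator the cubic is $\tilde f(x)=\tilde ax^3+\tilde bx^2-1$ and the comparison $\tilde g(x)=\tfrac53\tilde bx^2-1\ge\tilde f(x)$ yields $t_u=-3/(5\tilde b)$. This is where the constants $3$ and $\tfrac15$ come from, so your instinct that they ``absorb the error terms'' is right, but the mechanism is a concrete polynomial comparison rather than a generic slack argument.
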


\begin{proof} Assume without loss of generality that 
\begin{equation*}
R_0(\bar x)<0\quad \text{ and }\quad c'(u_0)(\bar x)>0,
\end{equation*}
since the other case can be treated similarly. Then there exists $(\xi_0, \eta_0)$, not necessarily unique, such that 
\begin{equation*}
  (0,\bar x)=(0, y(0, \xi_0))= (t(\xi_0, \eta_0), x(\xi_0, \eta_0)) \quad \text{ and } \quad R_0(\bar x)= h (\xi_0, \eta_0).
\end{equation*}
Furthermore, the integral turning up in both \eqref{est:bound1} and \eqref{est:bound33}, can be rewrittes as 
\begin{align*}
  \int_{\eta_0}^\eta \frac{c'(U)}{c(U)} t_\eta(\xi_0, \tilde \eta) d\tilde \eta
  & = \int_{\eta_0}^\eta \frac{c'(u(t(\xi_0, \tilde \eta), x(\xi_0, \tilde \eta)))}{c(u(t(\xi_0, \tilde \eta), x(\xi_0, \tilde \eta)))}t_\eta(\xi_0,\tilde \eta) d\tilde \eta\\
  & =- \int_{t(\xi_0, \eta)}^{0} \frac{c'(u(s, y(s, \xi_0)))}{c(u(s, y(s,\xi_0)))} ds,
\end{align*}
since $x(\xi_0, \eta)= y(t(\xi_0, \eta), \xi_0)$ for all $\eta\in \Real$. Thus, we aim at estimating the integral on the right hand side from above as well as below. However, be aware that the estimates \eqref{est:bound1} and \eqref{est:bound33} are only valid as long as $c'(u(s, y(s, \xi_0)))>0$ for all $s\in (t(\xi_0, \eta), 0]$. This is guaranteed locally as we will see next. 

By \eqref{cond:cder}, Lemma~\ref{lem:hol} and \eqref{def:char}, we have that 
\begin{align}\nonumber
c'(u(s, y(s, \xi_0)))& \geq c'(u_0(y(0, \xi_0)))- \vert c'(u(s, y(s, \xi_0)))- c'(u(0, y(0, \xi_0)))\vert \\ \nonumber
& \geq c'(u_0(\bar x))- \bar \lambda D \sqrt{\vert s\vert + \vert y(s, \xi_0)- y(0, \xi_0)\vert }\\ \label{cp:lb}
& \geq c'(u_0( \bar x))- \bar \lambda D \sqrt{1+ \kappa} \sqrt{\vert s\vert}>0
\end{align}
for all $0\leq -s\leq -t(\xi_0, \eta)<  T$, 
where $T$ satisfies 
\begin{equation}\label{def:T}
\vert c'(u_0( \bar x))\vert = \bar\lambda D \sqrt{1+\kappa} \sqrt{T}.
\end{equation} 
 Furthermore, following the same lines we also obtain an upper bound
\begin{equation}\label{cp:ub}
c'(u(s, y(s, \xi_0)))\leq c'(u_0( \bar x))+\bar \lambda D \sqrt{1+ \kappa} \sqrt{\vert s\vert}.
\end{equation}

Assume from now on that $0\leq -t(\xi_0, \eta)\leq T$, then 
\begin{align} \nonumber
 \int_{\eta_0}^\eta \frac{c'(U)}{c(U)} t_\eta(\xi_0, \tilde \eta) d\tilde \eta& = - \int_{t(\xi_0, \eta)}^{0} \frac{c'(u(s, y(s, \xi)))}{c(u(s, y(s,\xi)))} ds\\ \nonumber
  & \leq \frac{1}{\kappa}\int_{t(\xi_0, \eta)}^{0} -c'(u_0(\bar x))+\bar \lambda D\sqrt{1+\kappa} \sqrt{-s} ds\\ \label{cond:integral}
  & =\frac{1}{\kappa} \left(c'(u_0(\bar x))t(\xi_0, \eta)+\frac23 \bar \lambda D \sqrt{1+\kappa}(-t(\xi_0, \eta))^{3/2}\right).
 \end{align}
To obtain a lower bound $t_l$ for the exact wave breaking time $t(\xi_0,\bar  \eta)$, observe that \eqref{est:bound33} combined with the above estimate implies that $t_1\leq t(\xi_0, \bar \eta)<0$, where $t_1$ is implicitly given as the solution to 
\begin{equation*}
  1- \frac14\left(1- \frac{1}{2A}\right)R_0( \bar x)\frac{1}{\kappa} \left(c'(u_0(\bar x))t_1+\frac23 \bar \lambda D \sqrt{1+\kappa}(-t_1)^{3/2}\right)=0.
\end{equation*}
Note that those $(-t_1)^{1/2}>0$, which satisfy the above equality, can bee seen as zeros (on the positive real line) of the third order polynomial 
\begin{equation*}
  f(x)= -ax^3+ bx^2-1,
\end{equation*}
where $a$ and $b$ are positive constants. As a closer look reveals $f$ attains a minimum at $x=0$, where $f(0)=-1$, a maximum at $x= \frac{2b}{3a}=\sqrt{T}$, where $f(\frac{2b}{3a})=\frac{4b^3}{27 a^2}-1$, and is strictly decreasing on the interval $[\frac{2b}{3a}, \infty)$. Thus $f(x)$ will either have $0$, $1$, or $2$ positive zeros. In particular, one has that the change from $0$ to $2$ positive zeros $\bar z_1 < \bar z_2$ occurs when $4b^3= 27 a^2$. Or, in other words for $4b^3>27 a^2$, which we assume from now on, $f(x)$ has two positive zeros $\bar z_1<\frac{2b}{3a}=\sqrt{T}<\bar z_2$ and wave breaking occurs before $c'(u(s, y(s, \xi_0)))$ changes sign. Although we cannot compute $\bar z_1=(-t_1)^{1/2}$ explicitly, we can try to estimate it from above. Therefore, observe that 
 \begin{equation*}
 g(x)= \frac13 bx^2-1 \leq f(x) \quad \text{ for all } x\in [0, \frac{2b}{3a}],
 \end{equation*}
 with equality for $x=0$ and $x= \frac{2b}{3a}$. Thus there exists a unique $\hat z_1\in(0, \frac{2b}{3a}]$ such that $g(\hat z_1)=0$ and $z_1\leq \hat z_1$. As a closer look reveals $\hat z_1=\sqrt{ \frac{3}{b}}$ and 
 \begin{equation*}
 t_l=- \frac{3}{b}= -\hat z_1^2\leq - z_1^2 =t_1.
 \end{equation*}
 
To obtain an upper bound $t_u$ for the exact wave breaking time $t(\xi_0, \bar \eta)$, we slightly modify the steps leading to \eqref{cond:integral} by using \eqref{cp:ub} to obtain the following lower bound
\begin{equation*}
 \int_{\eta_0}^{\eta} \frac{c'(U)}{c(U)} t_\eta(\xi, \tilde \eta) d\tilde \eta\geq \kappa\left(c'(u_0( \bar x))t(\xi_0, \eta)-\frac23 \bar\lambda D \sqrt{1+\kappa}(-t(\xi_0, \eta))^{3/2}\right).
\end{equation*}
Combined with \eqref{est:bound1} this estimate implies that $t(\xi_0, \bar \eta)\leq t_2<0$, where $t_2$ is implicitly given as the solution to 
\begin{equation*}
  1- \frac14R_0( \bar x)\kappa \left(c'(u_0(\bar x))t_2-\frac23 \bar \lambda D \sqrt{1+\kappa}(-t_2)^{3/2}\right)=0.
\end{equation*}
Note that those $(-t_2)^{1/2}>0$, which satisfy the above equality, can be seen as zeros (on the positive real line) of the third order polynomial  
\begin{equation*}
  \tilde f(x)= \tilde ax^3+\tilde bx^2-1,
\end{equation*}
where $\tilde a$ and $\tilde b$ are positive constants. As a closer look reveals $\tilde f$ attains a minimum at $x=0$, where $\tilde f(0)=-1$, is strictly increasing on the interval $[0, \infty)$ and crosses the positive $x$-axis in exactly one point $\bar z$, which we can try to estimate from below. Therefore, observe that 
\begin{equation*}
\tilde a = \frac{1}{1- \frac{1}{2A}}\kappa^2a \geq a \quad \text{ and }\quad  \tilde b= \frac{1}{1- \frac{1}{2A}} \kappa^2 b \geq b,
\end{equation*}
 which implies $\frac{2b}{3a}= \frac{2\tilde b}{3\tilde a}=\sqrt{T}$  and that 
\begin{equation*}
\tilde g(x)= \frac{5}{3} \tilde b x^2-1 \geq \tilde f(x)\geq f(x) \quad \text{ for all } x\in [0, \frac{2\tilde b}{3\tilde a}]
\end{equation*}
with $\tilde g(x)= \tilde f(x)$ for $x=0$ and $x= \frac{2\tilde b}{3\tilde a}$. Thus there exists a unique $\hat z \in (0, \frac{2\tilde b}{3\tilde a})$ such that $\tilde g(\hat z)=0$ and $\hat z\leq \bar z$. As a closer look reveals $\hat z= \sqrt{\frac{3}{5\tilde b}}$ and 
\begin{equation*}
t_2= - \bar z^2\leq - \hat z^2 =- \frac{3}{5\tilde b}= t_u.
\end{equation*}

This finishes the proof, since the wave breaking time $t(\xi_0, \bar\eta)$ satisfies 
\begin{equation*}
-\frac{3}{b}=t_l\leq t_1\leq t(\xi_0, \bar \eta)\leq t_2\leq t_u= -\frac{3}{5\tilde b}.
\end{equation*} 
\end{proof}

Next, assume that  $\eta\leq \eta_0$, which implies, using \eqref{signt}, that $t(\xi_0, \eta_0) \leq t(\xi_0, \eta)$ and hence will yield a criterion for wave breaking in the near future. In this case \eqref{asymp} requires
\begin{equation}\label{limit:1}
c'(U)h(\xi_0, \eta) \to \infty \quad \text{ as } \quad \eta\to \bar \eta
\end{equation}
for some $\bar \eta\leq \eta_0$. Due to a symmetry argument, which we present next, the result is an immediate consequence of the estimates established until now. 

Define the change of coordinates
\begin{equation*}
  (\xi,\eta) \to (2\xi_0- \xi, 2\eta_0- \eta),
\end{equation*}
which corresponds to rotating $\mathbb{R}^2$ by $\pi$ around the point $(\xi_0, \eta_0)$. In particular, one has that the half line through $(\xi_0, \eta_0)$ given by $\{(\xi_0, \eta)\mid \eta\leq \eta_0\}$ is mapped to the half line through $(\xi_0, \eta_0)$ given by $\{(\xi_0, \eta)\mid \eta\geq \eta_0\}$. Moreover, defining 
\begin{align*}
  \hat Z(\xi, \eta)& = (\hat t, \hat x, \hat U, \hat J)( \xi, \eta)\\
  & = (t, x,U, J)(2\xi_0- \xi, 2\eta_0- \eta)= Z(2\xi_0- \xi, 2\eta_0- \eta),
\end{align*}
which satisfies
\begin{equation}\label{sign:change}
  \hat Z_i(\xi,  \eta)= - Z_i(2\xi_0-\xi, 2\eta_0-\eta) \quad \text{ for } i\in \{\xi, \eta\}
\end{equation}
as well as \eqref{eq:Lagr}, yields upon substitution
\begin{align*}
  \int_{\eta_0}^\eta \frac{c'(U)}{2c(U)} \frac{U_\xi}{x_\xi} x_\eta(\xi_0, \tilde \eta) d\tilde \eta
  & = -\int_{\eta_0}^{2\eta_0- \eta} \frac{c'(U)}{2c(U)} \frac{U_\xi}{x_\xi} x_\eta(\xi_0, 2\eta_0-\tilde \eta) d\tilde \eta\\
  & = \int_{\eta_0}^{2\eta_0-\eta} \frac{c'(\hat U)}{2c(\hat U)} \frac{\hat U_\xi}{\hat x_\xi}\hat x_\eta (\xi_0, \tilde \eta)d\tilde \eta.
\end{align*}
By assumption $\eta_0\geq\eta$, which implies $2\eta_0-\eta\geq \eta_0$ and the integral on the right hand side is of the same form as the integral in \eqref{asymp} and has to satisfy the same limit. However, there is one important difference: $\hat x_\eta(\xi_0, \eta)\leq 0$. Therefore, we look at the integrand as the following product
\begin{equation*}
  \frac{c'(\hat U)}{2c(\hat U)} \frac{\hat U_\xi}{\hat x_\xi}\hat x_\eta (\xi_0,\eta)= -2\hat \gamma \hat h (\xi_0, \eta),
\end{equation*}
where 
\begin{equation}\label{def:hh} 
  \hat h(\xi_0, \eta)=-h(\xi_0, 2\eta_0-\eta)= -c(\hat U) \frac{\hat U_\xi}{\hat x_\xi}(\xi_0, \eta)
\end{equation}
and
\begin{equation}
  \hat \gamma(\xi_0, \eta)=\gamma(\xi_0, 2\eta_0-\eta)= \frac{c'(\hat U)}{4c^2(\hat U)} \hat x_\eta(\xi_0, \eta). 
\end{equation}
In this new coordinates, \eqref{limit:1} rewrites as
\begin{equation*}
c'(\hat U)\hat h(\xi_0, \eta) \to -\infty \quad \text{ as } \quad \eta\to \bar \eta
\end{equation*}
for some $\bar \eta\geq \eta_0$. Furthermore, $\hat h(\xi_0, \eta)$ satisfies the first order differential equation
\begin{equation}\label{difflig:hh}
  \hat h_\eta(\xi_0, \eta)= \hat \alpha(\xi_0, \eta)+ \hat \gamma \hat h^2(\xi_0, \eta),
\end{equation}
where 
\begin{equation}
  \hat \alpha(\xi_0, \eta)=\alpha(\xi_0, 2\eta_0-\eta)= -\frac{c'(\hat U)}{2c^2(\hat U)} \hat J_\eta(\xi_0, \eta).
\end{equation}
and $(\hat \alpha, \hat \gamma, \hat h)$ play the same role and have the same properties as $(\alpha, \gamma, h)$ in the case $\eta\geq \eta_0$. As an immediate consequence we obtain that if $h(\xi_0, \eta_0)$ satisfies \eqref{cond:HA}, then 
\begin{equation}\label{est:bound3}
  \int_{\eta_0}^\eta \frac{c'(U)}{2c(U)}\frac{U_\xi}{x_\xi}x_\eta(\xi_0, \tilde \eta) d\tilde \eta\geq 2\ln\left(1+h(\xi_0, \eta_0)\int_{\eta}^{\eta_0} \frac14 \frac{c'(U)}{c(U)} t_\eta(\xi_0, \tilde \eta) d\tilde \eta\right)
\end{equation}
and
\begin{equation}\label{est:bound4}
  \int_{\eta_0}^\eta \frac{c'(U)}{2c(U)}\frac{U_\xi}{x_\xi}x_\eta(\xi_0, \tilde \eta) d\tilde \eta\leq 2\ln\left(1+\left(1-\frac{1}{2A}\right) h(\xi_0, \eta_0)\int_{\eta}^{\eta_0} \frac14 \frac{c'(U)}{c(U)} t_\eta(\xi_0, \tilde \eta) d\tilde \eta\right).
\end{equation}

From here on we follow the proof of Theorem~\ref{thm:wb1} with slight modifications and we assume without less of generality that 
\begin{equation*}
R_0(\bar x)=(u_{0,t}+ c(u_0)u_{0,x})( \bar x)>0 \quad \text{ and } \quad c'(u_0( \bar x))>0.
\end{equation*} 

An upper bound $t_u$ for the exact wave breaking time $t(\xi_0, \bar \eta)$ is given by $t(\xi_0, \bar \eta)\leq t_1$, where $t_1$ is implicitly given as the solution to 
\begin{equation*}
1- \frac14 \left(1- \frac{1}{2A}\right) R_0( \bar x)\frac{1}{\kappa} \left(c'(u_0( \bar x))t_1- \frac23 \bar \lambda D\sqrt{1+\kappa} t_1^{3/2}\right).
\end{equation*}
Again, those $t_1^{1/2}>0$ which satisfy the above equality can be seen as zeros (on the positive real line) of the third order polynomial 
\begin{equation*}
f(x)= - ax^3+bx^2-1,
\end{equation*}
where $a$ and $b$ are positive constants. In particular, one has that if $27 a^2<4b^3$,
then 
\begin{equation*}
t_1\leq \frac{3}{b}= t_u.
\end{equation*}

A lower bound $t_l$ for the exact wave breaking time $t(\xi_0, \bar \eta)$ is given by $0\leq t_2\leq t(\xi_0, \bar \eta)$, where $t_2$ is implicitly given as the solution to 
\begin{equation*}
1- \frac14 R_0(\bar x)\kappa \left(c'(u_0(\bar x))t_2+ \frac23 \bar \lambda D\sqrt{1+\kappa} t_2^{3/2}\right).
\end{equation*}
Note, those $t_2^{1/2}>0$, which satisfy the above equality, can be seen as zeros (on the positive real line) of the third order polynomial 
\begin{equation*}
\tilde f(x)= \tilde ax^3+\tilde bx^2-1,
\end{equation*}
where $\tilde a$ and $\tilde b$ are positive constants. In particular, one has that 
\begin{equation*}
t_l=\frac{3}{5\tilde b}\leq t_2.
\end{equation*}
Since the wave breaking time $t(\xi_0, \bar \eta)$ satisfies 
\begin{equation*}
t_l\leq t_2\leq t(\xi_0, \bar \eta)\leq t_1\leq t_u,
\end{equation*}
we have shown the following result.

\begin{theorem}[Wave breaking along backward characteristics - Part 2]\label{thm:wb2}
Given some initial data $(u_0,R_0, S_0, \mu_0, \nu_0)\in \mathcal{D}$, denote by $(u(t), R(t),S(t),\mu(t), \nu(t))$ the global conservative solution to the NVW equation at time $t$ and by $D$ the constant from Lemma~\ref{lem:hol}. If the following conditions are satisfied
\begin{itemize}
\item  $c'(u_0)R_0(\bar x)=c'(u_0)(u_{0,t}+c(u_0)u_{0,x})(\bar x)>0$, 
\item there exists $A\geq 1$ such that 
\begin{equation*}
  \vert R_0\vert ( \bar x)= A \lambda \kappa^2 (\mu_0+ \nu_0)(\Real) \quad \text{ and }
\end{equation*} 
\item 
$12 \bar \lambda^2D^2(1+\kappa) <(1-\frac{1}{2A})\frac{1}{\kappa} \vert c'(u_0)\vert^3\vert R_0\vert( \bar x) $, 
\end{itemize}
then wave breaking occurs along the backward characteristic $y(t, \xi)$ with $y(0,\xi)=\bar x$ within the time interval  
 \begin{equation*}
[t_l, t_u]=[ \frac{3}{5\tilde b}, \frac{3}{b}] , 
\end{equation*}
where 
\begin{equation*}\
b= \frac{1}{4\kappa}\left(1- \frac{1}{2A}\right)\vert c'(u_0)R_0\vert (\bar x)< \frac14 \kappa \vert c'(u_0)R_0\vert (\bar x)= \tilde b .
\end{equation*}
\end{theorem}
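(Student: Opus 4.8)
The plan is to deduce Theorem~\ref{thm:wb2} from the rotation symmetry and the estimates \eqref{est:bound3}--\eqref{est:bound4} established immediately above the statement, mirroring the cubic-polynomial analysis from the proof of Theorem~\ref{thm:wb1} but now carried out in positive time.

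Assuming without loss of generality that $R_0(\bar x)>0$ and $c'(u_0(\bar x))>0$, I first select a point $(\xi_0,\eta_0)$ with $(t,x)(\xi_0,\eta_0)=(0,\bar x)$ and $h(\xi_0,\eta_0)=R_0(\bar x)$. Future wave breaking along the backward characteristic corresponds to $\eta\leq\eta_0$, and the rotation $(\xi,\eta)\mapsto(2\xi_0-\xi,2\eta_0-\eta)$ has already recast the blow-up requirement \eqref{limit:1} into the framework of Case~1 and Case~2; under the scaling hypothesis \eqref{cond:HA} this produces the bounds \eqref{est:bound3} and \eqref{est:bound4}.

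Next I express the integral $\int_\eta^{\eta_0}\frac{c'(U)}{c(U)}t_\eta\,d\tilde\eta$ occurring in those bounds as an integral of $\frac{c'}{c}$ along the backward characteristic $s\mapsto y(s,\xi_0)$ over $[0,t(\xi_0,\eta)]$, now with $t(\xi_0,\eta)>0$. As in Theorem~\ref{thm:wb1}, the estimates remain valid only while $c'(u(s,y(s,\xi_0)))$ stays positive, which I control via \eqref{cond:cder}, Lemma~\ref{lem:hol}, and \eqref{def:char} to obtain the two-sided bound \eqref{cp:lb}--\eqref{cp:ub} on $[0,T]$, with $T$ given by \eqref{def:T}. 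Substituting these into \eqref{est:bound4} and \eqref{est:bound3} gives implicit equations whose solutions $t_1$ and $t_2$, viewed as positive zeros of the cubics $f(x)=-ax^3+bx^2-1$ and $\tilde f(x)=\tilde ax^3+\tilde bx^2-1$ in the variable $x=\sqrt{t}$, bracket the breaking time as $t_2\leq t(\xi_0,\bar\eta)\leq t_1$.

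Finally I extract explicit bounds from the cubics. The third hypothesis is precisely the discriminant condition $27a^2<4b^3$, which forces $f$ to possess a positive zero $\bar z_1<\sqrt{T}$, so that wave breaking occurs before $c'$ can change sign along the characteristic; bounding $f$ below by $\tfrac13 bx^2-1$ gives $t_1\leq 3/b=t_u$, while bounding $\tilde f$ above by $\tfrac53\tilde bx^2-1$ gives $t_l=3/(5\tilde b)\leq t_2$. I expect the main obstacle to be exactly this sign control: ensuring that $c'(u(s,y(s,\xi_0)))$ remains positive throughout $[0,t(\xi_0,\bar\eta)]$ so that the Riccati comparison underlying \eqref{est:bound3}--\eqref{est:bound4} stays legitimate. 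The discriminant condition $27a^2<4b^3$, which dictates the form of the third hypothesis, is what makes this possible by guaranteeing blow-up strictly before the time $T$ at which $c'$ might vanish.
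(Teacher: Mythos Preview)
Your proposal is correct and follows essentially the same approach as the paper: the rotation symmetry yielding \eqref{est:bound3}--\eqref{est:bound4}, the conversion of the $t_\eta$-integral into an integral of $c'/c$ along the backward characteristic on $[0,t(\xi_0,\eta)]$, the H\"older-based sign control of $c'$ on $[0,T]$, and the cubic-polynomial analysis with the quadratic comparisons $\tfrac13 bx^2-1\le f(x)$ and $\tilde f(x)\le\tfrac53\tilde bx^2-1$ are exactly the steps the paper carries out. Your identification of the third hypothesis with the discriminant condition $27a^2<4b^3$, ensuring blow-up before time $T$, is also precisely the mechanism the paper relies on.
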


\subsection{Wave breaking along forward characteristics}

Again, let $(\xi_0, \eta_0)\in \Real^2$, which corresponds to the point $(0, \bar x)= (t(\xi_0, \eta_0), x(\xi_0,\eta_0))$ in Eulerian coordinates, such that $x_\eta(\xi_0, \eta_0)>0$. Then wave breaking occurs along the backward characteristic $z(t, \eta_0)$ if the function $S(t, z(t, \eta_0))=(u_t-c(u)u_x)(t, z(t, \eta_0))$ becomes unbounded, which, as highlighted in Section~\ref{sec:back}, is equivalent to $x_\eta(\xi, \eta_0)$ tending to $0$ along the horizontal line $\{(\xi, \eta_0)\mid \xi \in \Real\}$ in the $(\xi, \eta)$ plane. In analogy to Section~\ref{sub:BC}, it is natural to ask the following question: Does there exist a point $(\bar \xi, \eta_0)$ close to $(\xi_0, \eta_0)$ such that $x_\eta(\xi, \eta_0)\to 0$ as $\xi\to \bar \xi?$ If so, wave breaking either occurred recently or will take place in the near future and it then remains to estimate the actual breaking time. 

To answer this question, we can follow the same line of reasoning as in Section~\ref{sub:BC}, since \eqref{eq:Lagr}, where the right hand side can be viewed as $F(U, Z_\xi, Z_\eta)$, satisfies 
\begin{equation*}
(Z_{\eta})_\xi= F(U, Z_\eta, Z_\xi)=  F(U, Z_\xi, Z_\eta)= (Z_\xi)_\eta.
\end{equation*}
However, some small and straightforward adjustments are needed, due to the sign difference in \eqref{rel:tx}. Thus, we here do not present all the details, but restrict ourselves to sketching the red line. 

Define 
\begin{equation*}
\check f(\xi,\eta_0)= \sqrt{\frac{c(U(\xi_0, \eta_0))}{c(U(\xi, \eta_0))}}  x_\eta(\xi, \eta_0),
\end{equation*}
which satisfies that $\check f(\xi, \eta_0) \to 0$ as $\xi\to \bar \xi$ if and only if $x_\eta(\xi, \eta_0) \to 0$ as $\xi\to \bar \xi$. Furthermore, 
\begin{equation*}
\check f(\xi, \eta_0)= \check f(\xi_0, \eta_0) e^{\int_{\xi_0}^\xi \frac{c'(U)}{c(U)}\frac{U_\eta}{x_\eta} x_\xi (\tilde \xi, \eta_0) d\tilde \xi},
\end{equation*}
and hence our goal is to establish a criterion, which guarantees that 
\begin{equation}\label{asymp2}
\int_{\xi_0}^\xi \frac{c'(U)}{c(U)}\frac{U_\eta}{x_\eta} x_\xi (\tilde \xi, \eta_0) d\tilde \xi \to -\infty \quad \text{ as } \quad \xi \to \bar \xi.
\end{equation}

The function 
\begin{equation*}
\check h(\xi, \eta_0)= c(U)\frac{U_\eta}{x_\eta}(\xi, \eta_0),
\end{equation*}
which plays the same role as $h(\xi_0, \eta)$ in Section~\ref{sub:BC}, satisfies the differential equation
\begin{equation*}
\check h_\xi(\xi, \eta_0)= \check \alpha (\xi, \eta_0)+ \check \gamma \check h^2(\xi, \eta_0),
\end{equation*}
where 
\begin{equation*}
\check \alpha (\xi, \eta_0)= \frac{c'(U)}{2c^2(U)}J_\xi (\xi, \eta_0)
\end{equation*}
and 
\begin{equation*}
\check \gamma (\xi, \eta_0)= -\frac{c'(U)}{4c^2(U)} x_\xi (\xi, \eta_0). 
\end{equation*}

Assume that $\xi_0\leq \xi$, which implies, using \eqref{signt}, that $t(\xi_0, \eta_0)\leq t(\xi, \eta_0)$. Then \eqref{asymp2} requires that 
\begin{equation*}
c'(U) \check h(\xi, \eta_0) \to -\infty \quad \text{ as } \quad \xi\to \bar \xi
\end{equation*}
for some $\xi_0\leq \bar \xi$. Following the same lines as in the case $\eta\geq \eta_0$ in Section~\ref{sub:BC} and recalling \eqref{rel:tx} we obtain that if there exists a constant $A\geq 1$ such that 
\begin{equation*}
\vert \check h(\xi_0, \eta_0)\vert = A\lambda \kappa^2 (\mu_0+\nu_0)(\Real), 
\end{equation*}
then 
\begin{align*}
2& \ln\left(1+ \check h(\xi_0, \eta_0) \int_ {\xi_0}^\xi \frac14 \frac{c'(U)}{c(U)}t_\xi (\tilde \xi, \eta_0) d\tilde \xi\right)
\leq \int_{\xi_0}^\xi \frac{c'(U)}{2c(U)} \frac{U_\eta}{x_\eta} x_\xi (\tilde \xi, \eta_0) d\tilde \xi\\
&\qquad \qquad \qquad \qquad \qquad  \leq 2 \ln\left( 1+ \left(1- \frac1{2A}\right)\check h(\xi_0, \eta_0)\int_{\xi_0}^\xi \frac14 \frac{c'(U)}{c(U)} t_\xi(\tilde \xi, \eta_0) d\tilde \xi \right).
\end{align*}
Since $t_\xi(\xi, \eta)$ has the opposite sign as $t_\eta(\xi, \eta)$, the analogue to the above estimates in Section~\ref{sub:BC} are given by \eqref{est:bound3} and \eqref{est:bound4}, where $\eta\leq \eta_0$. The main difference from here on is that 
\begin{equation}\label{h:R}
c'(U)h(\xi_0, \eta_0)= c'(u_0)(u_{0,t}+c(u_0)u_{0,x})( \bar x)= c'(u_0)R_0(\bar x) 
\end{equation}
while 
\begin{equation}\label{h:S}
c'(U)\check h(\xi_0, \eta_0)= -c'(u_0) (u_{0,t}-c(u_0)u_{0,x})( \bar x)=-c'(u_0)S_0(\bar x),
\end{equation}
and following the proof of Theorem~\ref{thm:wb2} we end up with the following theorem.

\begin{theorem}[Wave breaking along forward characteristics - Part 1]\label{thm:wb3}
Given some initial data $(u_0, R_0, S_0, \mu_0, \nu_0)\in \mathcal{D}$, denote by $(u(t), R(t), S(t), \mu(t), \nu(t))$ the global conservative solution to the NVW equation at time $t$ and by $D$ the constant from Lemma~\ref{lem:hol}. If the following conditions are satisfied
\begin{itemize}
\item  $c'(u_0)S_0(\bar x)=c'(u_0)(u_{0,t}-c(u_0)u_{0,x})(\bar x)>0$, 
\item there exists $A\geq 1$ such that 
\begin{equation*}
  \vert S_0\vert ( \bar x)= A \lambda \kappa^2 (\mu_0+ \nu_0)(\Real), \quad \text{ and }
\end{equation*}
\item $
12 \bar \lambda^2D^2(1+\kappa) <(1-\frac{1}{2A})\frac{1}{\kappa} \vert c'(u_0)\vert ^3\vert S_0\vert(\bar x) $,
\end{itemize}
then wave breaking occurs along the forward characteristic $z(t, \eta)$ with $z(0,\eta)=\bar x$ within the time interval 
 \begin{equation*}
[t_l, t_u]=[ \frac{3}{5\tilde b}, \frac{3}{b}] , 
\end{equation*}
where 
\begin{equation*}
b= \frac{1}{4\kappa}\left (1- \frac{1}{2A}\right)\vert c'(u_0)S_0\vert (\bar x)< \frac14 \kappa \vert c'(u_0)S_0\vert (\bar x)= \tilde b .
\end{equation*}

\end{theorem}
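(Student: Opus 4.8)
The plan is to exploit the symmetry of the Lagrangian system \eqref{eq:Lagr} under the interchange $\xi \leftrightarrow \eta$, which the discussion preceding the statement has already used to construct the forward-characteristic analogues $\check f$ and $\check h$ together with the two-sided logarithmic bounds displayed just before the theorem (the counterparts of \eqref{est:bound3} and \eqref{est:bound4}). The argument will therefore run exactly parallel to the proof of Theorem~\ref{thm:wb2}, the only genuine change being the sign bookkeeping recorded in \eqref{h:R}--\eqref{h:S}: along a forward characteristic one has $\check h(\xi_0,\eta_0) = -S_0(\bar x)$ in place of $h(\xi_0,\eta_0) = R_0(\bar x)$. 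First I would reduce, without loss of generality, to the case $S_0(\bar x)>0$ and $c'(u_0(\bar x))>0$ (the opposite-sign case being symmetric), fix a point $(\xi_0,\eta_0)$ with $(0,\bar x) = (t,x)(\xi_0,\eta_0)$, and note that the hypothesis $c'(u_0)S_0(\bar x)>0$ puts us in the regime $\xi_0 \le \xi$, so that by \eqref{signt} $t(\xi_0,\eta_0)\le t(\xi,\eta_0)$ and any detected blow-up lies in the future.

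Next I would rewrite the integral in \eqref{asymp2} as an integral along the forward characteristic: using $x(\xi,\eta_0) = z(t(\xi,\eta_0),\eta_0)$ from \eqref{def:char}, the relation \eqref{rel:tx}, and $t(\xi_0,\eta_0)=0$, one gets $\int_{\xi_0}^{\xi} \frac{c'(U)}{c(U)} t_\xi(\tilde\xi,\eta_0)\,d\tilde\xi = \int_0^{t(\xi,\eta_0)} \frac{c'(u(s,z(s,\eta_0)))}{c(u(s,z(s,\eta_0)))}\,ds$. As in the derivation of \eqref{cp:lb}--\eqref{def:T}, Lemma~\ref{lem:hol} combined with \eqref{cond:cder} then yields two-sided bounds on $c'(u(s,z(s,\eta_0)))$ keeping it strictly positive on a time interval $[0,T)$, where $T$ is fixed by $\abs{c'(u_0(\bar x))} = \bar\lambda D\sqrt{1+\kappa}\,\sqrt{T}$. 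Feeding these bounds into the forward analogues of \eqref{est:bound3} and \eqref{est:bound4} turns the blow-up criterion \eqref{asymp2} into the requirement that the argument of a logarithm vanish, which in the variable $x = t^{1/2}$ reduces to locating the positive zeros of the cubics $f(x) = -ax^3 + bx^2 - 1$ and $\tilde f(x) = \tilde a x^3 + \tilde b x^2 - 1$, precisely as in Theorem~\ref{thm:wb2} but with $R_0$ replaced by $S_0$.

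Finally I would carry out the polynomial root analysis. The second hypothesis $\abs{S_0}(\bar x) = A\lambda\kappa^2(\mu_0+\nu_0)(\Real)$ makes the choice $a = (1-\tfrac{1}{2A})\check h(\xi_0,\eta_0)$ admissible, and a direct computation shows that the third hypothesis is exactly the discriminant condition $4b^3 > 27a^2$ for $f$, with $b = \frac{1}{4\kappa}(1-\tfrac{1}{2A})\abs{c'(u_0)S_0}(\bar x)$. This guarantees that $f$ has two positive roots $\bar z_1 < \sqrt{T} < \bar z_2$, so breaking occurs strictly before $c'(u(s,z(s,\eta_0)))$ could change sign. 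Bounding $f$ from below and $\tilde f$ from above by the quadratics $g(x)=\tfrac13 b x^2 - 1$ and $\tilde g(x)=\tfrac53\tilde b x^2 - 1$ then pins the breaking time between $t_l = 3/(5\tilde b)$ and $t_u = 3/b$, yielding $t_l \le t(\xi_0,\bar\xi) \le t_u$.

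The main obstacle is not any single estimate but the sign control: the comparison principle (Proposition~\ref{prop:ber}) underlying the logarithmic bounds is valid only while $c'(U)$ keeps a fixed sign along the characteristic, so the crux is to show that breaking is forced to happen within the window $[0,T)$ on which positivity of $c'$ is guaranteed by Lemma~\ref{lem:hol}. The third hypothesis is engineered precisely to secure this via $4b^3>27a^2$, and once it holds the remaining steps are a mechanical transcription of the $\eta$-analysis of Section~\ref{sub:BC} into the $\xi$-analysis, with the single sign flip from \eqref{h:S}.
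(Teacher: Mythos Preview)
Your proposal is correct and follows essentially the same approach as the paper: exploit the $\xi\leftrightarrow\eta$ symmetry of \eqref{eq:Lagr} to transport the backward-characteristic analysis of Section~\ref{sub:BC} to the horizontal line $\{(\xi,\eta_0)\}$, invoke the sign flip \eqref{h:S} to see that $c'(u_0)S_0(\bar x)>0$ corresponds to $\xi\ge\xi_0$ (future), and then run the proof of Theorem~\ref{thm:wb2} verbatim with $S_0$ in place of $R_0$. Two cosmetic points: at the very end you wrote $t(\xi_0,\bar\xi)$ where you mean $t(\bar\xi,\eta_0)$, and in the sentence ``makes the choice $a=(1-\tfrac{1}{2A})\check h(\xi_0,\eta_0)$ admissible'' you are overloading the symbol $a$ (as does the paper) for both the comparison constant of \eqref{def:cona} and the cubic coefficient in $f(x)=-ax^3+bx^2-1$; this is harmless but worth flagging.
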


Finally, assume that $\xi\leq \xi_0$, which implies, using \eqref{signt}, that $t(\xi_0, \eta_0)\geq t(\xi, \eta_0)$. Then \eqref{asymp2} requires that 
\begin{equation*}
c'(U) \check h(\xi, \eta_0) \to \infty \quad \text{ as } \quad \xi\to \bar \xi
\end{equation*}
for some $\bar \xi\leq \xi_0$. Following the same lines as in the case $\eta\leq \eta_0$ in Section~\ref{sub:BC} and recalling \eqref{rel:tx} we obtain that if there exists a constant $A\geq 1$ such that 
\begin{equation*}
\vert \check h(\xi_0, \eta_0)\vert = A\lambda \kappa^2 (\mu_0+\nu_0)(\Real), 
\end{equation*}
then 
\begin{align*}
2& \ln\left(1- \check h(\xi_0, \eta_0) \int_ {\xi}^{\xi_0} \frac14 \frac{c'(U)}{c(U)}t_\xi (\tilde \xi, \eta_0) d\tilde \xi\right)
\leq \int_{\xi_0}^\xi \frac{c'(U)}{2c(U)} \frac{U_\eta}{x_\eta} x_\xi (\tilde \xi, \eta_0) d\tilde \xi\\
&\qquad \qquad \qquad \qquad \qquad  \leq 2 \ln\left( 1- \left(1- \frac1{2A}\right)\check h(\xi_0, \eta_0)\int_{\xi}^{\xi_0} \frac14 \frac{c'(U)}{c(U)} t_\xi(\tilde \xi, \eta_0) d\tilde \xi \right).
\end{align*}
Since $t_\xi(\xi, \eta)$ has the opposite sign as $t_\eta(\xi, \eta)$, the analogue to the above estimates in Section~\ref{sub:BC} are given by \eqref{est:bound1} and \eqref{est:bound33}, where $\eta\geq \eta_0$. Recalling \eqref{h:R} and \eqref{h:S} and following the proof of Theorem~\ref{thm:wb1}, we obtain the following result.

\begin{theorem}[Wave breaking along forward characteristics - Part 2]\label{thm:wb4}
Given some initial data $(u_0, R_0, S_0, \mu_0, \nu_0)\in \mathcal{D}$, denote by $(u(t), R(t), S(t), \mu(t), \nu(t))$ the global conservative solution to the NVW equation at time $t$ and by $D$ the constant from Lemma~\ref{lem:hol}. If the following conditions are satisfied
\begin{itemize}
\item  $c'(u_0)S_0(\bar x)=c'(u_0)(u_{0,t}-c(u_0)u_{0,x})(\bar x)<0$, 
\item there exists $A\geq 1$ such that 
\begin{equation*}
  \vert S_0\vert ( \bar x)= A \lambda \kappa^2 (\mu_0+ \nu_0)(\Real) \quad \text{ and }
\end{equation*}
\item 
$
12 \bar \lambda^2D^2(1+\kappa) <(1-\frac{1}{2A})\frac{1}{\kappa} \vert c'(u_0)\vert ^3\vert S_0\vert( \bar x)  $,
\end{itemize}
then wave breaking occurred along the forward characteristic $z(t, \eta)$ with $z(0,\eta)=\bar x$ within the time interval 
 \begin{equation*}
[t_l, t_u]=[-\frac{3}{b}, - \frac{3}{5\tilde b}] , 
\end{equation*}
where 
\begin{equation*}\
b= \frac{1}{4\kappa}\left (1- \frac{1}{2A}\right )\vert c'(u_0)S_0\vert (\bar x)< \frac14 \kappa \vert c'(u_0)S_0\vert (\bar x)= \tilde b .
\end{equation*}
\end{theorem}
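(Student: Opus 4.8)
The plan is to run the argument of Theorem~\ref{thm:wb1} with the forward-characteristic data, replacing the function $h(\xi_0,\eta)$ by $\check h(\xi,\eta_0)=c(U)\frac{U_\eta}{x_\eta}(\xi,\eta_0)$ and the backward characteristic $y$ by the forward characteristic $z$. Since the assumption $\xi\leq\xi_0$ places the candidate breaking point $\bar\xi$ in the past by \eqref{signt}, wave breaking there is equivalent to $x_\eta(\xi,\eta_0)\to0$, which by \eqref{asymp2} forces $\int_{\xi_0}^{\xi}\frac{c'(U)}{c(U)}\frac{U_\eta}{x_\eta}x_\xi(\tilde\xi,\eta_0)\dee\tilde\xi\to-\infty$. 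The relation \eqref{h:S}, namely $c'(U)\check h(\xi_0,\eta_0)=-c'(u_0)S_0(\bar x)$, shows that the hypothesis $c'(u_0)S_0(\bar x)<0$ fixes the correct sign at the base point, while the two-sided bounds \eqref{est:bound1} and \eqref{est:bound33} (which are the relevant ones here precisely because $t_\xi$ and $t_\eta$ carry opposite signs in \eqref{rel:tx}) control this integral from above and below in terms of $\check h(\xi_0,\eta_0)$ and $\int\frac{c'(U)}{c(U)}t_\xi$.

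First I would fix a point $(\xi_0,\eta_0)$ with $(0,\bar x)=(t,x)(\xi_0,\eta_0)$ and $S_0(\bar x)=-\check h(\xi_0,\eta_0)$, and reduce without loss of generality to the sign choice $S_0(\bar x)>0$, $c'(u_0)(\bar x)<0$. Using $x(\tilde\xi,\eta_0)=z(t(\tilde\xi,\eta_0),\eta)$ I would rewrite $\int_{\xi}^{\xi_0}\frac{c'(U)}{c(U)}t_\xi(\tilde\xi,\eta_0)\dee\tilde\xi=\int_{t(\xi,\eta_0)}^{0}\frac{c'(u(s,z(s,\eta)))}{c(u(s,z(s,\eta)))}\dee s$ as a line integral along the forward characteristic. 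As in \eqref{cp:lb}--\eqref{cp:ub}, combining \eqref{cond:cder}, Lemma~\ref{lem:hol}, and the speed bound $\abs{z_t}\leq\kappa$ from \eqref{def:char} and \eqref{cond:c} gives $\abs{c'(u(s,z))-c'(u_0(\bar x))}\leq\bar\lambda D\sqrt{1+\kappa}\sqrt{\abs s}$, so that $c'$ keeps its sign on the time window determined by \eqref{def:T} and the line integral admits explicit bounds of the form appearing in \eqref{cond:integral}.

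Substituting these bounds into \eqref{est:bound33} and \eqref{est:bound1} yields two implicit equations for the breaking time whose solutions $(-t)^{1/2}$ are the positive roots of cubics $f(x)=-ax^3+bx^2-1$ and $\tilde f(x)=\tilde ax^3+\tilde bx^2-1$ with $a,b,\tilde a,\tilde b>0$. The third hypothesis $12\bar\lambda^2D^2(1+\kappa)<(1-\frac1{2A})\frac1\kappa\abs{c'(u_0)}^3\abs{S_0}(\bar x)$ is exactly the discriminant condition $27a^2<4b^3$ guaranteeing that $f$ has two positive roots $\bar z_1<\sqrt T<\bar z_2$, so that genuine wave breaking occurs before $c'$ can vanish. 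The quadratic comparisons $g(x)=\tfrac13 bx^2-1\leq f(x)$ and $\tilde g(x)=\tfrac53\tilde bx^2-1\geq\tilde f(x)$ on $[0,\tfrac{2b}{3a}]$, tight at the endpoints, then produce $t_l=-\tfrac3b$ and $t_u=-\tfrac3{5\tilde b}$, with $b=\frac1{4\kappa}(1-\frac1{2A})\abs{c'(u_0)S_0}(\bar x)$ and $\tilde b=\frac14\kappa\abs{c'(u_0)S_0}(\bar x)$.

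The main obstacle, just as in Theorem~\ref{thm:wb1}, is that the Riccati-type logarithmic bounds \eqref{est:bound1} and \eqref{est:bound33} are only legitimate while $c'(u(s,z(s,\eta)))$ retains its sign along the characteristic; one must therefore confirm that the true breaking time lies strictly inside the window $[-T,0]$ on which this sign is controlled. This is precisely where the smallness of $\bar\lambda^2D^2$ relative to $\abs{c'(u_0)}^3\abs{S_0}(\bar x)$ is used, since it forces the relevant root to satisfy $\bar z_1<\sqrt T$. The only genuinely new bookkeeping compared with Theorem~\ref{thm:wb1} is tracking the sign flips induced by the opposite signs of $t_\xi$ and $t_\eta$ in \eqref{rel:tx} together with the minus sign in \eqref{h:S}; once these are accounted for, the cubic analysis and the resulting interval $[t_l,t_u]=[-\tfrac3b,-\tfrac3{5\tilde b}]$ are identical to those of Theorem~\ref{thm:wb1}.
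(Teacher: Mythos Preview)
Your proposal is correct and follows essentially the same approach as the paper: it reduces Theorem~\ref{thm:wb4} to the proof of Theorem~\ref{thm:wb1} by passing to the forward-characteristic quantity $\check h(\xi,\eta_0)$, invoking the sign relation \eqref{h:S}, observing that the opposite signs of $t_\xi$ and $t_\eta$ in \eqref{rel:tx} make \eqref{est:bound1} and \eqref{est:bound33} the relevant logarithmic bounds, and then repeating the H\"older/cubic analysis verbatim. This is exactly how the paper proceeds, and your identification of the discriminant condition $27a^2<4b^3$ with the third hypothesis and of the sign-preservation window with \eqref{def:T} matches the paper's argument.
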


\subsection{Remark}\label{sec:Rem1} All the derived criteria for wave breaking have in common that either $R= u_t+c(u)u_x$ or $S=u_t-c(u)u_x$ become unbounded, which implies that either $u_t$ or $u_x$ or both become unbounded. However, it can be shown, and this will be done next, that wave breaking can only occur if $c'(u)u_x$ becomes unbounded, which rules out the possibility that $u_x$ remains bounded while $u_t$ blows up due to \eqref{cond:cder}. 

\subsubsection{Backward characteristics} When wave breaking occurs along a backward characteristic, which corresponds to the vertical line $\{(\xi_0, \eta) \mid \eta\in \Real\}$ in the $(\xi, \eta)$ plane, the integral 
\begin{equation}\label{eq:int}
  \int_{\eta_0}^\eta \frac{c'(U)}{2c(U)}\frac{U_\xi}{x_\xi}x_\eta(\xi_0, \tilde\eta) d\tilde \eta,
\end{equation}
tends to $-\infty$. Since \begin{equation*}
  U(\xi,\eta)= u(t(\xi, \eta), x(\xi, \eta)),
\end{equation*}
implies 
\begin{align}\nonumber
  U_\xi(\xi,\eta)&= u_t(t(\xi, \eta), x(\xi, \eta))t_\xi(\xi, \eta)+ u_x(t(\xi, \eta), x(\xi, \eta))x_\xi(\xi, \eta)\\ \label{rel:Uxi}
  & = \frac{1}{c(U(\xi,\eta))}(u_t+c(u)u_x)(t(\xi, \eta), x(\xi, \eta))x_\xi(\xi, \eta)
\end{align}
and 
\begin{align}\nonumber
  U_\eta(\xi, \eta)& = u_t(t(\xi, \eta), x(\xi, \eta))t_\eta(\xi, \eta)+ u_x(t(\xi, \eta), x(\xi, \eta))x_\eta(\xi, \eta)\\ \label{rel:Ueta}
  & = - \frac{1}{c(U(\xi, \eta))}(u_t-c(u)u_x)(t(\xi, \eta), x(\xi, \eta)) x_\eta(\xi, \eta),
\end{align}
the following identity holds
\begin{align*}
  \frac{U_\xi}{x_\xi}x_\eta(\xi, \eta)& = \frac{1}{c(U(\xi, \eta))}(u_t+c(u)u_x)(t(\xi, \eta), x(\xi, \eta))x_\eta(\xi, \eta)\\
  & = -U_\eta(\xi, \eta)+ 2u_x(t(\xi, \eta), x(\xi, \eta))x_\eta(\xi, \eta).
\end{align*}
Therefore the integral \eqref{eq:int} can be rewritten as 
\begin{align*}
  \int_{\eta_0}^\eta \frac{c'(U)}{2c(U)}\frac{U_\xi}{x_\xi}x_\eta(\xi_0, \tilde\eta) d\tilde \eta& = - \int_{\eta_0}^ \eta \frac{c'(U)}{2c(U)}U_\eta(\xi_0, \tilde \eta) d\tilde \eta \\
  & \quad + \int_{\eta_0}^\eta \frac{c'(u)}{c(u)} u_x(t(\xi_0, \tilde \eta), x(\xi_0, \tilde \eta)) x_\eta(\xi_0, \tilde \eta)d\tilde \eta\\
  & = -\frac12 ( \ln(c(U(\xi_0, \eta)))- \ln (c(U(\xi_0, \eta_0))))\\
  & \quad +  \int_{\eta_0}^\eta \frac{c'(u)}{c(u)} u_x(t(\xi_0, \tilde \eta), x(\xi_0, \tilde \eta)) x_\eta(\xi_0, \tilde \eta) d\tilde \eta,
\end{align*}
where the first term on the right hand side is always finite due to \eqref{cond:c}. As a consequence the integral on the left hand side can only tend to $-\infty$ if and only if the second integral on the right hand side tends to $-\infty$. Moreover, \eqref{rel:tx} and \eqref{def:crp} imply that 
\begin{align*}
  \int_{\eta_0}^\eta \frac{c'(u)}{c(u)}  u_x(t(\xi_0, \tilde \eta), x(\xi_0, \tilde \eta))x_\eta (\xi_0, \tilde \eta)d \tilde \eta &= - \int_{\eta_0}^\eta c'(u)u_x(t(\xi_0, \tilde \eta), x(\xi_0, \tilde \eta))t_\eta(\xi_0, \tilde\eta) d\tilde \eta\\
  & = - \int_{t(\xi_0, \tilde \eta_0)}^{t(\xi_0, \eta)} c'(u)u_x(l, y(l, \xi_0)) dl,
\end{align*} 
and hence wave breaking can only occur in the future if $c'(u)u_x(l, y(l, \xi_0))$ tends to $\infty$ and in the past if $c'(u)u_x(l, y(l, \xi_0))$ tends to $- \infty$.

\subsubsection{Forward characteristics} When wave breaking occurs along a forward characteristic, which corresponds to the horizontal line $\{ (\xi, \eta_0)\mid \xi\in \Real\}$ in the $(\xi, \eta)$ plane, we can follow the same lines to obtain that 
\begin{equation}\label{eq:int2}
  \int_{\xi_0}^\xi \frac{c'(U)}{2c(U)}\frac{U_\eta}{x_\eta} x_\xi(\tilde\xi, \eta_0) d\tilde \xi \to -\infty
\end{equation}
if and only if 
\begin{equation}\label{eq:cond}
 \int_{t(\xi_0, \eta_0)}^{t(\xi, \eta_0)} c'(u)u_x(l, z(l, \eta_0)) dl \to  - \infty .
 \end{equation}
Thus wave breaking can only occur in the future if $c'(u)u_x(l, z(l, \eta_0))$ tends to $-\infty$ and in the past if $c'(u)u_x(l, z(l, \eta_0))$ tends to $\infty$. 

Although these observations give more insight into the behavior of solutions close to wave breaking, they are not well suited for predicting wave breaking. If we only know that $c'(u)u_x(0, \bar x)$ is very large, we cannot distinguish between possible wave breaking in the future along a backward characteristic and in the past along a forward characteristic. This is also the reason why our wave breaking criteria are formulated in terms of $c'(u)R= c'(u)(u_t+c(u)u_x)$ and $c'(u)S=c'(u)(u_t-c(u)u_x)$. 

\subsection{Remark}\label{sec:Rem2}
All the derived criteria for wave breaking have in common that either $R= u_t+c(u)u_x$ or $S=u_t-c(u)u_x$ become unbounded. However, it can be shown, and this will be done next, that if wave breaking occurs at a point $(\bar t,\bar x)= (\bar t , y(\bar t, \xi_0))= (\bar t, z(\bar t, \eta_0))$, where $c'(u(\bar t, \bar x))\not =0$ and only $R$ or $S$, but not both become unbounded, which can be phrased in Lagrangian coordinates as either $x_\xi(\xi_0, \eta_0)=0$ or $x_\eta(\xi_0, \eta_0)=0$, but not both, then 
\begin{equation}\label{asymp:R}
R(t, y(t, \xi))\to \pm \infty \text{ as } t \to \bar t-  \quad \text{ and }\quad  R(t, y(t,\xi))\to \mp \infty \text{ as } t \to \bar t+
\end{equation}
or 
\begin{equation}\label{asymp:S}
S(t, z(t, \eta))\to \pm \infty \text{ as } t \to \bar t-  \quad \text{ and }\quad  S(t, z(t,\eta))\to \mp \infty \text{ as } t \to \bar t+
\end{equation}
where $t\to \bar t\mp$ denotes the left and right limit, respectively. 

\subsubsection{Backward characteristics} When wave breaking occurs along a backward characteristic, which corresponds to the vertical line $\{(\xi_0, \eta)\mid \eta\in \Real\}$ in the $(\xi, \eta)$ plane, we have that 
\begin{equation*}
x_\xi(\xi_0, \eta) \to 0 \quad \text{ and } \quad U_\xi(\xi_0, \eta) \to 0 \quad \text{ as } \eta\to  \eta_0
\end{equation*} 
for some $ \eta_0\in \Real$. 
Furthermore, we have by \eqref{eq:Lagr} that 
\begin{equation}\label{time:Uxi}
(\sqrt{c(U)}U_\xi)_\eta(\xi_0,  \eta_0)= \frac{c'(U)}{2c^{5/2}(U)}(x_\xi J_\eta+ x_\eta J_\xi)(\xi_0, \eta_0),
\end{equation}
and, since by assumption $c'(U(\xi_0, \eta_0))\not =0$ and $x_\eta(\xi_0, \eta_0)\not =0$, \eqref{prop:rel} implies that 
\begin{equation*}
(\sqrt{c(U)}U_\xi)_\eta(\xi_0, \eta_0)\not= 0.
\end{equation*}
Or, in other words, due to \eqref{cond:c}, $U_\xi(\xi_0, \cdot)$ changes sign at $\eta= \eta_0$ and there exists $\varepsilon>0$ such that $x_\xi(\xi_0, \eta)>0$ for all $\eta\in (\eta_0-\varepsilon, \eta_0+\varepsilon)\backslash \eta_0$. In view of \eqref{rel:Uxi}, this observation implies \eqref{asymp:R}.

\subsubsection{Forward characteristics}
When wave breaking occurs along a forward characteristic, which corresponds to the horizontal line $\{(\xi, \eta_0)\mid \xi \in \Real\}$ in the $(\xi, \eta)$ plane, we have that 
\begin{equation*}
x_\eta (\xi, \eta_0)\to 0 \quad \text{ and } \quad U_\eta(\xi, \eta_0)\to 0 \quad \text{ as } \quad \xi\to \xi_0
\end{equation*}
for some $\xi_0\in \Real$. Observing now that 
\begin{equation}\label{time:Ueta}
(\sqrt{c(U)}U_\eta)_\xi (\xi_0, \eta_0)= \frac{c'(U)}{2c^{5/2}(U)}(x_\xi J_\eta+ x_\eta J_\xi)(\xi_0, \eta_0)
\end{equation} 
and recalling that by assumption $c'(U(\xi_0, \eta_0))\not =0$ and $x_\xi(\xi_0, \eta_0)\not =0$, we can follow the same line of reasoning as in the case of wave breaking along backward characteristics, to conclude that \eqref{asymp:S} holds. 

\section{Examples}
We will now provide two examples to illustrate our results. First, we will apply our criteria for wave breaking to one of the few weak solutions of the NVW equation for nonconstant $c(u)$, which can be computed explicitly. This solution has been derived under the traveling wave ansatz, as exemplified in \cite{GlaHunZhe1996,AurKol2017,GruRei2021}. Unfortunately, as we shall demonstrate applying our wave breaking criteria, this solution is not a conservative solution. This is because the construction of traveling wave solutions in \cite{GruRei2021} only takes into account \eqref{NVW}, but not the time evolution of the measures $\mu$ and $\nu$, which is given by $J_{\xi, \eta}$ in Lagrangian coordinates.

Second, we consider some initial data, where the measures $\mu_0$ and $\nu_0$ are pure Dirac measures, and in addition, there exists some interval $[\alpha, \beta]\subset \Real$ such that $(u_0, R_0, S_0)= (\gamma, 0,0)$. If $c'(\gamma)=0$, then it turns out that the corresponding conservative solution satisfies $(u,R,S)\equiv (\gamma, 0,0)$ on the set $M=\{(t,x)\mid -\alpha+c(\gamma) \vert t\vert \leq x\leq \alpha- c(\gamma) \vert t\vert\}$. In addition, the Dirac measures $\mu$ and $\nu$ restricted to $M$, move in opposite directions with speed $c(\gamma)$ and hence behave locally, i.e., inside $M$, exactly like solutions of the linear wave equation.

\subsection{A traveling wave solution, that is not a conservative solution} Let us consider a traveling wave solution of \eqref{NVW}, $u(t,x)=w(x-st)$, where $w$ is a continuous and piecewise smooth function. 

As in 
 \cite{GruRei2021}, by inserting this ansatz into \eqref{NVW}, one finds
  \begin{equation*}
    s^2 w''(\zeta) = c(w)^2w''(\zeta) + c(w)c'(w)(w')^2(\zeta),
  \end{equation*}
  which can be rewritten as
  \begin{equation*}
    [(s^2-c(w(\zeta))^2)(w'(\zeta))^2]' = 0.
  \end{equation*}
Integrating once yields 
\begin{equation}\label{eq:tw:const}
    \abs{s^2 -c(w(\zeta))^2}(w')^2(\zeta) = k^2,
  \end{equation}
  where we have chosen the positive constant on the right to be the square of some $k \in \Real$.
  This implies that any smooth wave profile $w$ must satisfy
  \begin{equation}\label{eq:tw:slope}
    \sqrt{\abs{s^2-c(w(\zeta))^2}}w'(\zeta) = k
  \end{equation}
  for some constant $k$ wherever it is differentiable. Unfortunately, the resulting profiles $w$ will be monotone and hence do not belong to $H^1(\Real)$, which we require in Section~\ref{sec:back}. On the other hand, according to  \cite[Theorem 1.1]{GruRei2021} we can construct a piecewise smooth traveling wave solution $u(t,x)= w(x-st)$ for \eqref{NVW} by gluing together curve segments, which satisfy \eqref{eq:tw:slope} for different choices of $k$, at points where $\abs{s} = c(w)$.
  
  The traveling wave solution, we want to have a closer look at, is taken from \cite{GruRei2021} and we will refer to it as a hut. Let the wave speed be $c(u) = \sqrt{s^2 + \sin(u)}$ for $s>1$, and note that $c(u) = s$ for any $u$ such that $\sin(u) = 0$, and especially $u=0$ and $u=\pi$. Moreover, note that $u\equiv 0$ is a solution of \eqref{NVW}. We can therefore pick $\bar k>0$ and connect the constant state $w=0$, with an increasing wave profile $w$, which satisfies \eqref{eq:tw:slope} with $k= \bar k$ and connects $0$ with $\pi$, followed by a decreasing wave profile $w$, which satisfies \eqref{eq:tw:slope} with $k= -\bar k$ and connects $\pi$ with $0$, and then prolong $w$ by zero, as illustrated in Figure~\ref{fig:cuspedTW}.  
    \begin{figure}
    \includegraphics[width=0.8\linewidth]{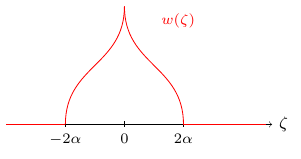}
    \caption{The ``hut'' traveling wave profile $w(\zeta)$.}
    \label{fig:cuspedTW}
  \end{figure}
  If we assume without loss of generality that the gluing point $\bar \zeta$, where $w(\bar\zeta)=\pi$, is located at $\bar \zeta=0$, then there exists some number $\alpha>0$, such that $w(-2\alpha)=w(2\alpha)=0$ and 
  \begin{equation*}
  w'(\zeta)=\begin{cases}
  \frac{\bar k}{\sqrt{\sin(w(\zeta))}}, & \zeta \in (-2\alpha,0),\\
  -\frac{\bar k}{\sqrt{\sin(w(\zeta))}}, & \zeta \in (0,2\alpha),\\
  0, & \text{otherwise.}
  \end{cases}
  \end{equation*}
  Thus $w$ belongs to $H^1(\Real)$ and according to  \cite[Theorem 1.1]{GruRei2021}  $u(t,x) = w(x-st)$ is a weak solution to \eqref{NVW}, which satisfies $u(t, \cdot)\in H^1(\Real)$ and $u_t(t, \cdot)\in L^2(\Real)$ for all $t\in \Real$. Furthermore, the functions $R(t,\cdot)$ and $S(t, \cdot)$, which are given by    \begin{equation*}
  R(t,x)= (u_t+c(u)u_x)(t,x)=-(s-c(w))w'(x-st) 
 \end{equation*}
 and 
 \begin{equation*}
  S(t,x)=(u_t-c(u)u_x)(t,x)= -(s+c(w))w'(x-st),
  \end{equation*}
   belong to $L^2(\Real)$ for all $t\in \Real$ and the associated measures $\mu(t)$ and $\nu(t)$ are purely absolutely continuous and given by \eqref{meas:abs}. In addition, and this sparks our interest in this  example, the slope of the nonconstant curve segments at the gluing points is unbounded. That means for the functions 
  \begin{equation}\label{rep:R}
  	R(t,x) = (c(w(x-st))-s) \frac{\pm \bar k }{\sqrt{c(w(x-st))^2-s^2} }= \pm \bar k \sqrt{\frac{c(w(x-st))-s}{c(w(x-st))+s}}
  \end{equation}
  and
  \begin{equation}\label{rep:S}
  	S(t,x)= -(c(w(x+st))+s) \frac{\pm \bar k }{\sqrt{c(w(x+st))^2-s^2}} = \mp \bar k  \sqrt{\frac{c(w(x+st))+s}{c(w(x+st))-s}},
  \end{equation}
since $s>1$ and $c(u)= \sqrt{s^2+\sin(u)}$, that $R(t,x)$ is bounded for all $(t,x)\in \mathbb{R}^2$, while $S^2(t,x)\to \infty$ as $(t,x)$ tends to $(t, \pm 2\alpha+st)$ or $(t, st)$. Or, in other words, wave breaking occurs for the solution $u(t,x)= w(x-st)$ at each time $t$ at the following 3 points: $-2\alpha+st$, $st$, and $2\alpha+st$. 
 
As stated earlier we claim that the hut traveling wave solution is a weak solution to \eqref{NVW}, but not a conservative solution in the sense of Section~\ref{sec:back}. One way of arguing, which we will follow, is to apply the wave breaking criteria from Theorem~\ref{thm:wb3} and Theorem~\ref{thm:wb4}, which allow us to predict locally the sign of $\bar u_t-c(\bar u)\bar u_x$ for the conservative solution $\bar u(t,x)$, which does not coincide with the one of the traveling wave solution $u(t,x)=w(x-st)$. 

If $u(t,x)= w(x-st)$ would be a conservative solution to \eqref{NVW}, then the corresponding initial data in $\D$ is given by 
\begin{equation*}
(u(0,x), R(0,x), S(0,x), \mu(0), \nu(0))= (w(x), R(0,x), S(0,x), \frac14 R^2(0,x), \frac14 S^2(0,x)),
\end{equation*}
where $R(0,x)$ and $S(0,x)$ are given by \eqref{rep:R} and \eqref{rep:S}, respectively. Thus it suffices to show that the conservative solution to the nonlinear variational wave equation $(\bar u(t,x), \bar R(t,x), \bar S(t,x), \bar \mu(t), \bar \nu(t))$
with initial data
\begin{align*}
(\bar u_0(x), \bar R_0(x),  \bar S_0(x), \bar \mu_0, \bar \nu_0)= (w(x), R(0,x), S(0,x), \frac14 R^2(0,x), \frac14 S^2(0,x)).
\end{align*}
satisfies $\bar u(\bar t,x)\not \equiv u(\bar t,x)$ for some $\bar t>0$.  

The function $c'(u_0)S_0(x)$ is continuous and strictly positive on the interval $(\alpha, 2\alpha)$. In addition, one has that 
\begin{equation*}
c'(u_0)S_0(x)\to 0 \quad \text{as }x\to \alpha \quad \text{ and } \quad  c'(u_0)S_0(x)\to \infty \quad \text{ as } x\to 2\alpha-.
\end{equation*}
Thus, Theorem~\ref{thm:wb3} implies, that there exists $\beta \in (\alpha, 2\alpha)$ such that wave breaking will occur along every forward characteristic starting in $(\beta, 2\alpha)$ in the near future. Indeed, let $I=\{x\in (\alpha, 2\alpha)\mid 1<2\cos(w(x))\}$, then 
\begin{equation*}
c'(w(x))=\frac{\cos(w(x))}{2c(w(x))}>\frac1{4\kappa} \quad \text{ for all } x\in I
\end{equation*}
and to each $A\geq 1$ there exists an interval $J_A= (\alpha_A, 2\alpha)\subset I$ such that for all $x\in J_A$
\begin{equation*}
\vert S_0(x)\vert \geq A\lambda \kappa^2 (\mu_0+\nu_0)(\Real) 
\end{equation*}
and 
\begin{equation*}
12 \bar \lambda^2 D^2 (1+\kappa) <\left(1-\frac{1}{2A}\right)\frac{1}{64\kappa^4} \vert S_0(x)\vert \leq\left (1- \frac{1}{2A}\right) \frac{1}{\kappa} \vert c'(u_0)\vert ^3 \vert S_0\vert (x). 
\end{equation*}
That means that for any given $A\geq 1$ each $x\in J_A$ satisfies all the assumptions of Theorem~\ref{thm:wb3} and in particular, we have that wave breaking occurs along each forward characteristic $z(t, \eta)$ with $z(0, \eta)=x\in J_A$ within the time interval 
\begin{equation}\label{cond:T1}
[0, T_A]= [0, \frac{48\kappa^2 }{1- \frac{1}{2A}} \sup_{x\in J_A}\frac{1}{S_0(x)}].
\end{equation}
Furthermore, we have that for each $t>0$ and every forward characteristic $z(t,\eta)$ with $z(0, \eta)=x\in I$ that
\begin{equation*}
c'(\bar u(t, z(t, \eta)))\geq c'(\bar u(0, z(0, \eta)))- \bar \lambda D\sqrt{1+\kappa} \sqrt{t}\geq \frac1{4\kappa} - \bar \lambda D\sqrt{1+ \kappa}\sqrt{t}
\end{equation*}
and in particular
\begin{equation}\label{cond:T2}
c'(\bar u(t, z(t, \eta)))>0 \text{ for all }t \in [0, 2T] \text{ if } 
T<\frac{1}{32 \bar \lambda^2D^2 \kappa^2(1+\kappa)}.
\end{equation}
Thus, choosing $A\geq 1$ big enough, we cannot only achieve that wave breaking occurs within the interval $[0,T_A]$ along each forward characteristic $z(t, \eta)$ with $z(0, \eta)\in J_A=(\alpha_A, 2\alpha)$, but also that $c'(\bar u(t, z(t, \eta)))>0$ for all $t \in [0, 2T_A]$. 

To summarize, we have shown so far that there exists $\bar A\geq 1$ minimal such that for the interval $J_{\bar A}=(\alpha_{\bar A}, 2\alpha)\subset (\alpha, 2\alpha)=I$ the following holds. For each forward characteristic $z(t, \eta)$ with $z(0, \eta)\in J_{\bar A}$
\begin{itemize}
\item wave breaking occurs within the time interval $[0, T_{\bar A}]$, where $T_{\bar A}$ is given by \eqref{cond:T1} and 
\item $c'(\bar u(t, z(t, \eta)))>0$ for all $t\in [0, 2T_{\bar A}]$.
\end{itemize}
This means, in view of Remark~\ref{sec:Rem1}, that to each $\eta$ which satisfies  $z(0, \eta)\in J_{\bar A}$, there exists a $\tau_\eta\in [0, T_{\bar A}]$ such that 
\begin{equation*}
\bar S(t, z(t, \eta))\to \infty \quad \text{ and }\quad  \bar u_x(t, z(t, \eta)) \to -\infty \quad \text{ as } t \to \tau_\eta-.
\end{equation*}

To determine the sign of $\bar S (t,z(t, \eta))$ for $t\in (\tau_\eta, 2T_{\bar A}]$, i.e., after wave breaking, for every $\eta$ such that $z(0, \eta)\in J_{\bar A}$, we want to take advantage of Remark~\ref{sec:Rem2}, which is applicable if we can show that no wave breaking occurs along the backward characteristics $y(t,\xi)$ with $y(0,\xi)\in (\beta,\infty)$ within the time interval $[0, 2T_{\bar A}]$.

Let $\bar x\in (\beta, \infty) \subset (\alpha, \infty)$, then $\bar R_0(\bar x)= (\bar u_{0,t}+c(\bar u_0)\bar u_{0,x})(\bar x)\leq 0$ and following the same lines as for the forward characteristics, we obtain for every backward characteristic $y(t,\xi)$ with $y(0,\xi)\in (\beta, \infty)$ that 
\begin{equation*}
c'(\bar u(t, y(t, \xi)))>0 \quad \text{ for all } t\in [0,2T_{\bar A}]. 
\end{equation*}
Furthermore, \eqref{rel:Uxi} implies that $U_\xi(\xi, \eta)\leq 0$ and $x_\xi(\xi, \eta)>0$ for all $(\xi, \eta)$ such that $(t(\xi, \eta), x(\xi, \eta))= (0, \bar x)$. Recalling \eqref{time:Uxi} and using that 
\begin{equation*}
\left(\frac{x_\xi}{\sqrt{c(U)}}  \right)_\eta(\xi, \eta)= \frac{c'(U)}{2c^{3/2}(U)} (x_\eta U_\xi)(\xi, \eta),
\end{equation*}
by \eqref{eq:Lagr2},
we find that for $\bar \eta\leq \eta$ 
\begin{equation}\label{last:ineq}
U_\xi(\xi,\bar\eta)\leq U_\xi(\xi, \eta)\leq 0 \quad \text{ and } \quad 0<x_\xi(\xi, \eta) \leq x_\xi(\xi, \bar \eta)
\end{equation}
as long as $c'(U(\xi, \bar\eta))$ does not change sign. However, we know that $y(t(\xi, \eta), \xi)=x(\xi, \eta)$ and hence, using \eqref{rel:Uxi} once more, we obtain for all $t\in [0, 2T_{\bar A}]$ that 
\begin{equation*}
-\infty <\bar R(t, y(t, \xi))\leq 0 \quad \text{ for all } \xi \text{ such that } y(0, \xi)\in (\beta, \infty)
\end{equation*}
and for every backward characteristic $y(t,\xi)$ with $y(0, \xi)=x\in (\beta, \infty)$ no wave breaking occurs within the time interval $[0, 2T_{\bar A}]$. Thus Remark~\ref{sec:Rem2} implies that 
\begin{equation*}
\bar S(t, z(t, \eta))\to - \infty \quad \text{ and } \quad \bar u_x(t, z(t, \eta)) \to \infty \quad \text{ as } t \to \tau_\eta+.
\end{equation*}
Moreover, by \eqref{time:Ueta}, one has that $\bar S(t, z(t, \eta)) <0 $ for all $t\in (\tau_\eta, 2T_J]$.

Last but not least, choose $\hat A\geq \bar A$, which implies that $J_{\hat A}\subset J_{\bar A}$ and $T_{\hat A}\leq T_{\bar A}$, such that $z(t, \eta)\in (st, \infty)$ for all $\eta$ with $z(0, \eta)\in J_{\hat A}$ and $t\in [0, 2T_{\hat A}]$. Then we have for $\bar t= \frac32 T_{\hat A}$ that 
\begin{equation*}
I_{\bar t}=\{x= z(\bar t, \eta)\mid z(0, \eta)\in J_{\hat A}\} \subset (s\bar t, \infty).
\end{equation*}
Furthermore, 
\begin{equation*}
\bar S(\bar t, x) <0 \quad \text{ while } \quad S(\bar t, x)\geq 0  \quad \text{ for all } x \in I_{\bar t},
\end{equation*}
and in particular $ \bar S (\bar t, \cdot)\not \equiv S(\bar t, \cdot)$. Therefore, the traveling wave solution $u(t,x)=w(x-st)$ cannot be a conservative solution.

\begin{remark}
As a closer look at \cite{GruRei2021} and \cite{HR} reveals, the main reason, why the above traveling wave solution is not a conservative solution, is that in \cite{GruRei2021} the main focus is on identifying all possible wave profiles $w$ such that $u(t,x)=w(x-st)$ is a weak solution to \eqref{NVW}. As a consequence, the equations describing the time evolution of the measures $\mu$ and $\nu$ are completely neglected therein. This is in contrast to \cite{HR}, where conservative solutions in $\D$ are constructed and to handle the concentration and the spreading of energy thereafter, it is essential to take the time evolution of these measures into account.  
\end{remark}

\subsection{A solution that behaves locally like the linear wave equation.} One property, which distinguishes the conservative solutions of the nonlinear variational wave equation from the conservative solutions of the Hunter--Saxton and the Camassa--Holm equation is the fact, that energy can concentrate on sets of measures zero, e.g., in the form of a Dirac delta, but the concentrated energy might not spread out immediately thereafter, as the following example illustrates.

Inspired by \cite[Footnote 4, p.~956]{HR}, we consider \eqref{NVW} for some initial data $(u_0, R_0, S_0, \mu_0, \nu_0)\in \D$, which satisfies the following assumption: There exists an interval $I=[-\alpha, \alpha]$ such that 
\begin{itemize}
\item $(u_0, R_0,S_0)(x)=(\gamma, 0, 0)$ for all $x\in I$,
\item $\mu_0\vert_I= a \delta_0$ and $\nu_0\vert _I= b \delta_0$ for some $a\geq 0$ and $b\geq 0$.
\end{itemize}
Furthermore, we require that $c\in C^2(\Real)$ and $c'(\gamma)=0$. 
Defining 
\begin{equation}\label{def:setM}
M=\{(t,x)\mid -\alpha+ c(\gamma)\vert t\vert \leq x\leq \alpha- c(\gamma)\vert t\vert\},
\end{equation}
we will show that
\begin{equation}\label{prop:E1}
(u,R,S)(t,x)= (\gamma, 0, 0) \quad \text{ for all } (t,x)\in M
\end{equation}
and for $I_t= [-\alpha+ c(\gamma)\vert t\vert, \alpha- c(\gamma)\vert t\vert]$
\begin{equation}\label{prop:E2}
\mu(t)\vert _{I_t}= a \delta_{-c(\gamma) t} \quad \text{ and } \quad \nu(t)\vert_{I_t}= b \delta_{c(\gamma)t}.
\end{equation}

\begin{figure}
   \includegraphics[width=0.7 \linewidth]{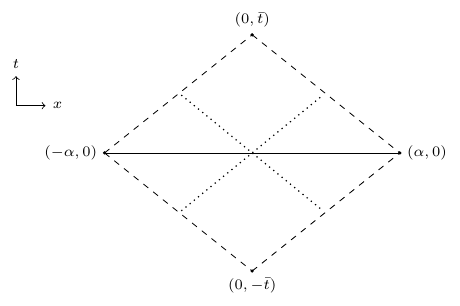}
    \includegraphics[width=0.7\linewidth]{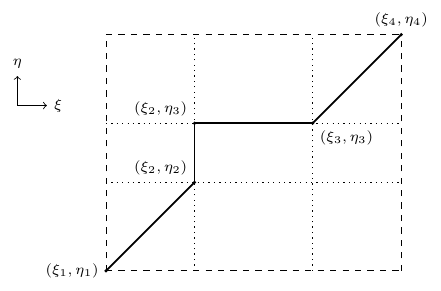}
    \caption{The set $M$ in Eulerian coordinates and the corresponding box $\Omega$ in Lagrangian coordinates. The thick curve in each plot shows the set on which the initial data is given. The dotted lines in the first picture show the position of the discrete part of $\mu(t)$ and $\nu(t)$ inside $M$, while the two stripes bounded by the dotted lines indicate the set where $x_\eta$ (vertically) and $x_\xi$ (horizontally) equal zero.}
    \label{fig:EulLag}
  \end{figure}

The set of Eulerian coordinates $\{(0,x) \mid x\in I\}$ corresponds to a curve $\bar{\mathcal{C}}_I\subset \bar{\mathcal{C}}$, which connects the point $(\xi_1, \eta_1)$ to the point $(\xi_4, \eta_4)$ as in Figure~\ref{fig:EulLag}. Along the vertical line which connects $(\xi_2, \eta_2)$ with $(\xi_2, \eta_3)$, we have 
\begin{equation*}
(t_\eta, x_\eta, U_\eta, J_\eta)= (0,0,0,1)
\end{equation*}
and $\eta_3-\eta_2=b$, while 
\begin{equation*}
(t_\xi, x_\xi, U_\xi, J_\xi)= (0,0,0,1)
\end{equation*}
along the horizontal line which connects $(\xi_2, \eta_3)$ with $(\xi_3, \eta_3)$ and $\xi_3-\xi_2=a$. Or in other words the vertical line segment corresponds to the discrete part of $\nu_0$, while the horizontal one represents the discrete part of $\mu_0$ inside $I$, which is a consequence of the measure $\mu_0+\nu_0$ being mapped to the function $J$. 

Let $\Omega= [\xi_1, \xi_4]\times[\eta_1, \eta_4]$. Then for any point $(\xi, \eta) \in \Omega$, we can fix one of the coordinates and find the corresponding point on $\bar{\mathcal{C}}_I$ with the same coordinate, which we denote by $(\xi, \mathcal{P}(\xi))$ and $(\mathcal{Q}(\eta), \eta)$, respectively. By assumption, we have $U(\xi, \eta)= \gamma$ for all $(\xi, \eta) \in \bar{\mathcal{C}}_I$, which can be rephrased as 
\begin{equation*}
U(\xi, \mathcal{P}(\xi))= \gamma= U(\mathcal{Q}(\eta), \eta) \quad \text{ for all } (\xi, \eta) \in \Omega.
\end{equation*}
Furthermore, 
\begin{equation*}
U_\xi(\xi, \mathcal{P}(\xi))=U_\xi(\mathcal{Q}(\eta), \eta) =0= U_\eta(\xi, \mathcal{P}(\xi))= U_\eta(\mathcal{Q}(\eta), \eta) \quad \text{ for all } (\xi, \eta) \in \bar{\mathcal{C}}_I,
\end{equation*}
since $R_0(x)=0=S_0(x)$ for all $x\in I$.
As we will see
\begin{equation}\label{cl:constu}
U(\xi, \eta)= \gamma \quad \text{ for all } (\xi, \eta) \in \Omega.
\end{equation}
Indeed, pick $(\xi, \eta) \in \Omega$ below $\bar{\mathcal{C}}_I$. Then $t(\xi, \eta) \geq 0$ and we can write 
\begin{align*}
U(\xi, \eta)& = U(\xi, \eta)- U(\xi, \mathcal{P}(\xi))+ \gamma\\
& = \gamma + \int_{\mathcal{P}(\xi)}^\eta \left(U_\eta (\xi, \tilde \eta)- U_\eta (\mathcal{Q}(\tilde \eta), \tilde \eta)\right ) d\tilde \eta\\
& = \gamma + \int_{\mathcal{P}(\xi)}^\eta \int_{\mathcal{Q}(\tilde \eta)}^\xi U_{\eta, \xi}(\tilde \xi, \tilde \eta) d\tilde \xi d\tilde \eta.
\end{align*}
Now, recalling \eqref{eq:Lagr3}, we find for all $(\xi, \eta) \in \Omega$,
\begin{equation*}
U_{\eta, \xi}(\xi, \eta)= \left(c'(U(\xi, \eta))- c'(\gamma)\right) \left( \frac{x_\xi J_\eta+x_\eta J_\xi}{2c^3(U)}- \frac{U_\xi U_\eta}{2c(U)}\right) (\xi, \eta),
\end{equation*}
where we have used $ c'(\gamma)=0$. Furthermore, \eqref{rel:seebreak} implies that 
\begin{align}\nonumber
(\sqrt{x_\xi J_\eta}- \sqrt{x_\eta J_\xi})^2 & \leq x_\xi J_\eta+ x_\eta J_\xi-c^2(U)U_\xi U_\eta \\ \label{CS}
& \leq (\sqrt{x_\xi J_\eta}+ \sqrt{x_\eta J_\xi})^2\leq 4 (x_\xi+J_\xi)(x_\eta+ J_\eta)
\end{align}
and hence, recalling \eqref{cond:cder} 
\begin{equation*}
\vert U_{\eta, \xi}(\xi, \eta) \vert \leq 2\bar \lambda\kappa^3 (x_\xi+J_\xi)(x_\eta+ J_\eta)(\xi, \eta) \vert U(\xi, \eta)- \gamma\vert.
\end{equation*}
Finally, choose $L>0$ such that $\Omega\subset S_L$, where $S_L$ is given by \eqref{def:SL}. Then there exist positive constants $C_1$ and $C_2$, such that \eqref{prop:rel} holds for all $(\xi, \eta) \in \Omega$ and 
\begin{equation*}
\vert U_{\eta, \xi }(\xi, \eta) \vert \leq  2\bar \lambda \kappa^3C_2^2\vert U(\xi, \eta) - \gamma \vert= B\vert U(\xi, \eta)- \gamma \vert.
\end{equation*}
Therefore we have 
\begin{equation*}
\vert U(\xi, \eta) - \gamma\vert \leq B \int_{\eta}^{\mathcal{P}(\xi)} \int_{\mathcal{Q}(\tilde \eta)}^{\xi} \vert U(\tilde \xi, \tilde \eta) - \gamma\vert d\tilde \xi d\tilde \eta. 
\end{equation*}
and introducing the function $\ell(\xi, \eta) = \vert U(\xi, \eta)- \gamma\vert $ the above inequality can be written as
\begin{equation*}
0 \leq \ell (\xi, \eta) \leq B\int_{\eta}^{\mathcal{P}(\xi)} \int_ {\mathcal{Q}(\tilde\eta)}^{\xi} l (\tilde \xi, \tilde \eta) d\tilde \xi d\tilde \eta=h(\xi, \eta),
\end{equation*}
where we also introduced the function $h(\xi, \eta)$. Differentiating $h(\xi, \eta)$, we obtain 
\begin{equation*}
h_\eta( \xi, \eta)= - B \int_{\mathcal{Q}(\eta)}^\xi l(\tilde \xi, \eta) d\tilde \xi \geq - B \int_{\mathcal{Q}(\eta)}^\xi h(\tilde \xi, \eta)d\tilde \xi \geq - B (\xi- \mathcal{Q}(\eta)) h(\xi, \eta),
\end{equation*}
where the last inequality follows since $h( \cdot, \eta)$ is increasing. Thus we can apply Gronwall's inequality to obtain 
\begin{equation*}
h(\xi, \eta) \leq \exp\left(B \int_{\eta}^{\mathcal{P}(\xi)}\int_{\mathcal{Q}(\tilde \eta)}^\xi d\tilde \xi d\tilde \eta\right) h( \xi , \mathcal{P}(\xi))=0, 
\end{equation*}
since $h(\xi, \mathcal{P}(\xi))=0$ for all $\xi \in [\xi_1, \xi_4]$, which implies that 
\begin{equation*}
0 \leq \vert U(\xi, \eta)- \gamma\vert =\ell(\xi, \eta) \leq h(\xi, \eta)=0.
\end{equation*}
Following the same line of reasoning with slight modifications for $(\xi, \eta)\in \Omega$ above $\bar{\mathcal{C}}_I$, we end up with \eqref{cl:constu}.

As an immediate consequence we obtain that 
\begin{equation}\label{sol:box1}
c(U(\xi, \eta))= c(\gamma) \quad \text{ and }\quad c'(U(\xi, \eta))= 0 \quad \text{ for all }(\xi, \eta) \in \Omega,
\end{equation}
which in turn implies using \eqref{eq:Lagr}
\begin{equation}\label{sol:box2}
(t_\xi, x_\xi, U_\xi, J_\xi)(\xi, \eta) = (t_\xi, x_\xi, U_\xi, J_\xi) (\xi, \mathcal{P}(\xi)) \quad \text{ for all } (\xi, \eta) \in \Omega
\end{equation}
and
\begin{equation}\label{sol:box3}
(t_\eta, x_\eta, U_\eta, J_\eta)(\xi, \eta)= (t_\eta, x_\eta, U_\eta, J_\eta) (\mathcal{Q}(\eta), \eta) \quad \text{ for all } (\xi, \eta) \in \Omega.
\end{equation}
From these observations it now follows that the set $M$ in Eulerian coordinates coincides with the set $\Omega$ in Lagrangian coordinates, since vertical lines in the $(\xi, \eta)$ plane correspond to backward characteristics in the $(t,x)$ plane, while horizontal lines correspond to forward characteristics. Thus, the mapping from Lagrangian to Eulerian coordinates implies \eqref{prop:E1}. 

To finally conclude that also \eqref{prop:E2} holds, observe that in the vertical stripe bounded by the dotted lines in the $(\xi, \eta)$ plane $J_\xi \equiv 1$ and $x_\xi\equiv 0$, while $J_\xi\equiv 0$ outside, which together with $\xi_3-\xi_2=a$ gives rise to the first equality in \eqref{prop:E2}. For the second one, we use that in the horizontal stripe bounded by the dotted lines in the $(\xi, \eta)$ plane $J_\eta\equiv 1$ and $x_\eta\equiv 0$, while $J_\eta\equiv 0$ outside, which together with $\eta_3-\eta_2=b$ finishes the proof. 

\begin{remark} 
Although we restricted ourselves to considering the case of a finite interval $I=[-\alpha, \alpha]$ and measures $\mu_0=a\delta_0$ and $\nu_0=b\delta_0$, the above result can be extended to the case of the whole real line, in which case one has to require $\gamma=0$, or to the case of $\mu_0$ and $\nu_0$ being a finite sum of Dirac masses. 
\end{remark}

Last but not least we want to take this observation as a starting point to show that outside the box $[\xi_1, \xi_4]\times [\eta_1, \eta_4]$ the behavior can be quite different. In particular, we focus on showing that the Dirac mass for $\mu$ must not exist for all times $t\in \Real$.

Consider the following initial data $(u_0, R_0, S_0, \mu_0, \nu_0)\in \D$: Let $0<\alpha <\beta$ such that 
\begin{itemize}
\item $(u_0, R_0, S_0)(x)= (\star, 0, 2)$ for all $x\in [-\beta , -\alpha)$,
\item $(u_0, R_0, S_0)(x)= (\gamma, 0, 0)$ for all $x\in [-\alpha, \alpha]$,
\item $(u_0, R_0, S_0)(x)= (\star, 0, -2)$ for all $x\in (\alpha, \beta]$
\item $(u_0, R_0, S_0)(x)= (0,0,0)$ for all $x\in \Real \backslash[-\beta, \beta]$,
\item $\mu_0=a\delta_0$ and $\nu_0\vert= \mathbbm{1}_{[-\beta,-\alpha)\cup (\alpha, \beta]}+ b\delta_0$ for some $a\geq 0$ and $b\geq 0$.
\end{itemize}
Here $\star$ denotes the value of $u_0(x)$, which can be computed if the function $c(x)$ is known, since we have $c(u_0)u_{0,x}(x)= \frac12 (R_0-S_0)(x)=- \frac12 S_0(x)=\mp 1$ for $x\in [-\beta, -\alpha) \cup (\alpha, \beta]$ and $u_0(-\alpha)=\gamma = u_0(\alpha)>0$. Furthermore, choose $c(x)\in C^2(\Real)$ such that $c(\gamma)=1$, $c'(\gamma)=0$, $c(x)$ is strictly decreasing on $(-\infty, \gamma]$ and strictly increasing on $[\gamma, \infty)$. Then we have
\begin{equation}\label{para}
(\mu_0+ \nu_0)(\Real)=2(\beta-\alpha)+a+b
\end{equation}
and since $\beta-\alpha$ is dependent on $c(x)$ and hence $\gamma$, the total energy can be made arbitrarily small, by choosing $a$, $b$, and $\gamma$ close enough to zero. 

\begin{figure}
   \includegraphics[width=0.7 \linewidth]{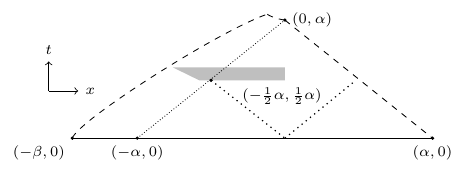}
    \includegraphics[width=0.7\linewidth]{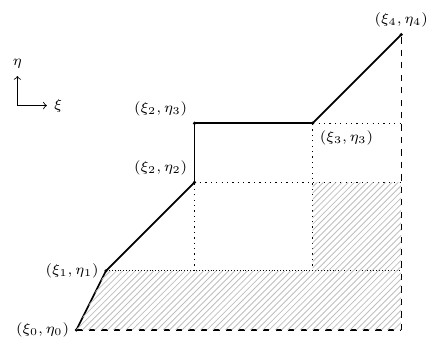}
    \caption{The set in Eulerian coordinates, which is bounded by the forward characteristic through $(0, -\beta)$ and the backward characteristic through $(0, \alpha)$ for $t\geq 0$ and the corresponding area in Lagrangian coordinates. The thick curve in each plot shows the set on which the initial data is given. The grey area in the picture in Eulerian coordinates indicates the region, where the measure $\mu$ is purely absolutely continuous while the dotted lines show the position of the discrete part of $\mu(t)$ and $\nu(t)$, respectively. In the second plot the shaded area represents the area in Lagrangian coordinates, which contains all the points in the grey area in Eulerian coordinates as a proper subset.}
    \label{fig:EulLag2}
  \end{figure}

As before, the set of Eulerian coordinates $\{(0,x)\mid x\in [-\beta, \alpha]\}$ corresponds to a curve $\bar{\mathcal{C}}_I \subset \bar{\mathcal{C}}$, which connects the point $(\xi_0, \eta_0)$ to the point $(\xi_4, \eta_4)$ as in Figure~\ref{fig:EulLag2}. Due to the choice of the initial data, the curve segment connecting $(\xi_1, \eta_1)$ with $(\xi_4, \eta_4)$ corresponds to $x\in [-\alpha, \alpha]$, which implies that the conservative solution in Lagrangian coordinates satisfies \eqref{sol:box1}-- \eqref{sol:box3} for $\Omega= [\xi_1, \xi_4]\times [\eta_1, \eta_4]$. In Eulerian coordinates we thus have that \eqref{def:setM}--\eqref{prop:E2} hold and $-c(\gamma)t=-t\in M$, where $M$ is given by \eqref{def:setM}, if $-\frac12\alpha \leq t\leq \frac12\alpha$. 
We will show that for suitable choices of $a$, $b$, $\alpha$, $\beta$, and $\gamma$, there exist $\varepsilon>0$ and $\delta>0$ such that for all $t \in (\frac12 \alpha, \frac12 \alpha+ \varepsilon)$ 
\begin{equation}\label{ref:claim}
\mu(t)\vert_{J_t}= \mu_{ac}(t)\vert_{J_t},
\end{equation}
where $J_t= [-\delta t, 0]$. Or in other words the energy which is concentrated at $(t,x)= (\frac12 \alpha,-\frac12 \alpha)$ does not remain concentrated forward in time, but is spread out immediately. 

To prove this result, we take advantage of the curve segment connecting $(\xi_0, \eta_0)$ with $(\xi_1, \eta_1)$, which corresponds to $x\in [-\beta, -\alpha]$ and satisfies $\eta= 2(\xi-\xi_0)+ \eta_0$. 
By assumption, we have along this line 
\begin{equation}\label{diag}
(x_\xi, c(U)U_\xi, J_\xi)=(\frac12, 0, 0) \quad \text{ and } \quad  (x_\eta, c(U)U_\eta, J_\eta)= ( \frac14, -\frac12, \frac12).
\end{equation} 

As a first step towards our claim, we want to show that $c(U)U_\eta(\xi, \eta) <0$ for all $(\xi, \eta) \in [\xi_0, \xi_4]\times [\eta_0, \eta_1]$, which lie below the curve $\bar{\mathcal{C}}_I$ and satisfy $t(\xi, \eta)\leq \frac34 \alpha$. 

Pick $(\xi, \eta)\in [\xi_0, \xi_4]\times [\eta_0,\eta_1]$ such that $0\leq t(\xi, \eta)\leq \frac34 \alpha$. Then $(\xi, \eta)$ lies below $\bar{\mathcal{C}}_I$ and \eqref{cond:c}, \eqref{cond:cder}, \eqref{rel:seebreak}, and \eqref{eq:Lagr} imply 
\begin{equation*}
\left\vert \left(\frac{2x_\xi+J_\xi}{\sqrt{c(U)}}\right)_\eta \right\vert \leq \frac14 \lambda \kappa^{3/2} \frac{2x_\xi+J_\xi}{\sqrt{c(U)}}\frac{2x_\eta+ J_\eta}{\sqrt{c(U)}}.
\end{equation*}
Furthermore, $(\xi, \mathcal{P}(\xi))\in \bar{\mathcal{C}}_I$ and by \eqref{prop:G}, \eqref{rel:tx}, \eqref{asymp:J1}, \eqref{asymp:J2}, and \eqref{cond:c}
\begin{align*}
\frac{2x_\xi+J_\xi}{\sqrt{c(U)}}(\xi, \eta)& \leq \frac1{\sqrt{c(U)}} e^{\frac14 \lambda \kappa^{3/2}\int_{\eta}^{P(\xi)} \left(-2\sqrt{c(U)}t_\eta+ \frac{J_\eta}{\sqrt{c(U)}}\right)(\xi, \tilde \eta) d\tilde \eta}\\
& \leq \kappa^{1/2} e^{\frac14 \lambda \kappa^2 (2t(\xi, \eta)+ (\mu_0+\nu_0)(\Real)) }.
\end{align*}
Following the same lines one obtains 
\begin{equation*}
\frac{2x_\eta+J_\eta}{\sqrt{c(U)}}(\xi, \eta)\leq  \kappa^{1/2} e^{\frac14 \lambda \kappa^2 (2t(\xi, \eta)+ (\mu_0+\nu_0)(\Real)) }.
\end{equation*}

Next, observe that 
\begin{equation*}
(c(U)U_\eta)_\xi= \frac12\frac{c'(U)}{c^2(U)}(x_\xi J_\eta+ x_\eta J_\xi)+\frac12 c'(U) U_\xi U_\eta
\end{equation*}
and hence we have for all $\tilde \xi \in [\mathcal{Q}(\eta), \xi]$, cf. \eqref{CS},
\begin{align*}
\vert (c(U)U_\eta)_\xi(\tilde \xi, \eta) \vert & \leq 2\lambda \kappa \frac{x_\xi+ J_\xi}{\sqrt{c(U)}}\frac{x_\eta+J_\eta}{\sqrt{c(U)}} \leq 2 \lambda \kappa^2 e^{\frac12 \lambda \kappa^2(2\beta+a+b)},
\end{align*}
where we used \eqref{CS}, \eqref{para} and that we require $0 \leq t(\xi, \eta) \leq \frac34 \alpha$.
As an immediate consequence we obtain
\begin{align*}
c(U)U_\eta(\xi, \eta)& = c(U)U_\eta (Q(\eta), \eta)+ \int_{Q(\eta)}^\xi  (c(U)U_\eta)_\xi (\tilde \xi , \eta) d\tilde \xi \\
& \leq -\frac12 + 2 \lambda \kappa^2 e^{\frac12 \lambda \kappa^2(2\beta+a+b)} (\xi_4-\xi_0).
\end{align*} 
Since the definition of $\bar{\mathcal{C}}_I$ implies 
\begin{align*}
\xi_4-\xi_0= a+\alpha+\beta,
\end{align*}
we end up with 
\begin{equation*}
c(U)U_\eta(\xi, \eta)\leq -\frac12 +2 \lambda \kappa^2 e^{\frac12 \lambda \kappa^2(2\beta+a+b)} (a+\alpha+\beta)<-\frac14 ,
\end{equation*}
if $a$, $\alpha$, and $\gamma$ are chosen small enough. Recalling \eqref{cond:c} and \eqref{rel:seebreak}, it thus follow that there exist $a$, $b$, $\alpha$ and $\gamma$, such that 
\begin{equation}\label{good:sign}
x_\eta(\xi, \eta)>0, \quad U_\eta(\xi, \eta)<0, \quad  \text{ and } \quad J_\eta(\xi, \eta)>0
\end{equation}
for all $(\xi, \eta) \in [\xi_0, \xi_4]\times [\eta_0, \eta_1]$ with $0 \leq t(\xi, \eta) \leq \frac34 \alpha$. 
Furthermore, one has 
\begin{equation*}
U(\xi, \eta)- \gamma \geq U(\xi, \eta)- U(\xi, \min(\mathcal{P}(\xi), \eta_1))= -\int_{\eta}^{\min( \mathcal{P}(\xi), \eta_1)} U_\eta (\xi, \tilde \eta) d\tilde \eta >0
\end{equation*}
for all $(\xi, \eta) \in [\xi_0, \xi_4]\times [\eta_0, \eta_1)$ with $0 \leq t(\xi, \eta) \leq \frac34 \alpha$, and hence $c'(U)(\xi, \eta)>0$ for all such $(\xi, \eta)$. 

To conclude the proof of our claim \eqref{ref:claim}. Observe that \eqref{eq:Lagr} and \eqref{rel:seebreak} imply
\begin{align*}
U_{\xi, \eta}&=   \frac12\frac{c'(U)}{c^3(U)}(x_\xi J_\eta+ x_\eta J_\xi) - \frac12 \frac{c'(U)}{c(U)} U_\xi U_\eta\\ 
& \geq  \frac{c'(U)}{2c^3(U)} (\sqrt{x_\xi J_\eta}- \sqrt{x_\eta J_\xi})^2 \geq 0
\end{align*}
which combined with \eqref{diag} and $c(U)U_\xi(\xi, \eta) = 0 $ for all $(\xi, \eta) \in \Omega$, yields 
\begin{equation*}
U_\xi(\xi, \eta) \leq 0 \quad \text{ for all } (\xi, \eta) \in [\xi_0, \xi_4]\times [\eta_0, \eta_1] \text{ with } 0 \leq t(\xi, \eta)\leq \frac34 \alpha.
\end{equation*}
Therefore, 
\begin{align*}
(c(U)U_\xi)_\eta& = \frac12\frac{c'(U)}{c^2(U)}(x_\xi J_\eta+ x_\eta J_\xi)+\frac12 c'(U) U_\xi U_\eta\\
& = \frac{c'(U)}{2c^2(U)} (\sqrt{x_\xi J_\eta}+ \sqrt{x_\eta J_\xi})^2\geq 0 ,
\end{align*}
and recalling \eqref{cond:c} and \eqref{good:sign} we end up with 
\begin{equation*}
x_\xi(\xi, \eta)>0, \quad U_\xi(\xi, \eta)<0, \quad  \text{ and } \quad J_\xi(\xi, \eta)>0
\end{equation*}
for all $(\xi, \eta) \in [\xi_0, \xi_4]\times [\eta_0, \eta_1)$ with $0 \leq t(\xi, \eta) \leq \frac34 \alpha$. 
Thus, choosing
\begin{equation*}
\varepsilon= \min(\frac18 \alpha, \frac12(t(\xi_2, \eta_0)- t(\xi_2, \eta_1))) =\min(\frac18 \alpha,  \frac12 (t(\xi_2, \eta_0)-\frac12 \alpha)),
\end{equation*}
there exists $\bar \xi\in [\xi_0, \xi_2)$ unique such that $t(\bar \xi, \eta_0)= \frac12 \alpha+ \varepsilon$ and 
\begin{equation*}
\mu\vert _{\tilde J_t}= \mu_{ac}\vert_{\tilde J_t} \quad \text{ for } \quad \frac12 \alpha\leq t\leq  \frac12 \alpha+ \varepsilon
\end{equation*}
where $\tilde J_t=[y(t, \bar \xi), y(t, \xi_4)] \supset [y(0, \bar \xi)-t, \alpha(1- t)]\supset[y(0, \bar \xi)-t,0]$, since $c(x)\geq 1$. Finally, let $y(0, \bar \xi)= \delta$ and hence $J_t= [-\delta t, 0]$ to finish the proof of \eqref{ref:claim}.

\begin{remark}
In \cite{BCZ} the uniqueness of conservative solutions to the nonlinear variational wave equation has been established. One of the properties characterising this class of solutions is the following:
For  almost every $t\in \Real$, the singular parts of $\mu(t)$ and $\nu(t)$ are concentrated on the set where $c'(u)=0$. 

As the above example illustrates the set where $c'(u)=0$ plays a key role, since it allows for conservative solutions such that there exists a set $\mathcal{B}\subset\Real$ with $\mathrm{meas}(\mathcal{B})>0$ such that for every $t\in \mathcal{B}$ either the measure $\mu(t)$ or $\nu(t)$ is not purely absolutely continuous. The main underlying reason is the fact that in regions where $c'(u)=0$ the solution behaves like those for the linear wave equation and hence concentrated energy cannot be spread out. 

For the conservative solutions of the Camassa--Holm and the Hunter--Saxton equation, on the other hand, the situation is quite different. One has to require that there exists a null set $\mathcal{N}\subset \Real$ such that for every $t\not \in \mathcal{N}$ the measure $\mu$ is absolutely continuous. Since $u$ for those equations plays the same role as $c(u)$ for the NVW equation, the imposed criteria is in alignment with the one in \cite{BCZ} if $c'(x)\not =0$ for all $x\in \Real$. 
\end{remark}
  
  \appendix
  
 \section{Useful estimates}

\begin{proposition}\label{prop:ber}
Assume $h(t)<0$ and $ \gamma(t)\leq 0$ for all $t\in [t_0,t_1]$ and that 
\begin{equation}\label{intineq2}
  a + \int_{t_0}^{t}  \gamma h^2(s) ds\leq h(t)\leq \tilde a + \int_{t_0}^{t}  \gamma h^2(s) ds<0, 
\end{equation}
then 
\begin{equation*}
  \frac{a}{1- a\int_{t_0}^t  \gamma (s) ds}\leq h(t)\leq \frac{ \tilde a}{1-  \tilde a\int_{t_0}^t  \gamma (s) ds}.
\end{equation*}
\end{proposition}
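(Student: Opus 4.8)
The plan is to read this as a Riccati-type comparison lemma and to linearize it through the reciprocal substitution, after first extracting the sign information that is implicit in the hypotheses. Evaluating \eqref{intineq2} at $t=t_0$ gives $a\le h(t_0)\le\tilde a<0$, so both $a$ and $\tilde a$ are strictly negative; moreover, since $\gamma\le 0$ on $[t_0,t_1]$, the primitive $\Gamma(t):=\int_{t_0}^t\gamma(s)\,ds$ is nonpositive. Consequently $1-a\Gamma(t)\ge 1$ and $1-\tilde a\Gamma(t)\ge 1$, so the two bounds appearing in the conclusion are well defined (their denominators never vanish) and are themselves negative. These sign facts are exactly what will make the reciprocal inversions below go in the correct direction.

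For the upper bound I would introduce the rightmost member of \eqref{intineq2},
\[
\phi(t)=\tilde a+\int_{t_0}^t\gamma h^2(s)\,ds,
\]
which is absolutely continuous, satisfies $h(t)\le\phi(t)<0$ with $\phi(t_0)=\tilde a$, and has $\phi'=\gamma h^2$ almost everywhere. Because $h\le\phi<0$ forces $h^2\ge\phi^2$, and $\gamma\le 0$, this yields the differential inequality $\phi'\le\gamma\phi^2$. The key device is then to set $w=-1/\phi$, which is positive and absolutely continuous since $\phi<0$, so that $w'=\phi'/\phi^2\le\gamma$. Integrating from $t_0$ gives $w(t)\le -1/\tilde a+\Gamma(t)$, and the right-hand side equals $(-1/\tilde a)\,(1-\tilde a\Gamma(t))>0$ by the sign facts above. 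Taking reciprocals of these two positive quantities reverses the inequality and produces $\phi(t)\le \tilde a/(1-\tilde a\Gamma(t))$; combined with $h\le\phi$ this is the desired upper estimate.

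The lower bound is entirely symmetric. I would set $\chi(t)=a+\int_{t_0}^t\gamma h^2(s)\,ds$, the leftmost member of \eqref{intineq2}, so that $\chi\le h<0$, $\chi(t_0)=a$, and $\chi'=\gamma h^2$ almost everywhere. Now $\chi\le h<0$ gives $h^2\le\chi^2$, hence $\chi'\ge\gamma\chi^2$; with $v=-1/\chi>0$ this becomes $v'\ge\gamma$, so that $v(t)\ge -1/a+\Gamma(t)>0$, and inverting again yields $\chi(t)\ge a/(1-a\Gamma(t))$. Since $h\ge\chi$, this closes the lower estimate and finishes the proof.

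The only genuine obstacle is sign bookkeeping: one must verify at each inversion that the quantities being reciprocated are strictly positive, which is precisely where $a,\tilde a<0$ together with $\Gamma\le 0$ are used. The differentiation of $\phi$ and $\chi$ is needed only in the almost-everywhere sense, and this is legitimate because both are primitives of the (assumed integrable) function $\gamma h^2$; passing from $w'\le\gamma$ back to an inequality for $\phi$ is then just integration of the absolutely continuous function $w$.
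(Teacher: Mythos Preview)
Your approach matches the paper's: introduce the auxiliary function (your $\phi$, the paper's $z$), derive the Riccati inequality $\phi'\le\gamma\phi^2$, and integrate; the paper simply writes that this differential inequality ``can be solved explicitly'', while you spell out the reciprocal substitution $w=-1/\phi$, which is precisely how one does so.

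There is, however, a sign slip in your opening paragraph. With $a,\tilde a<0$ and $\Gamma(t)\le 0$ the product $a\Gamma(t)$ is \emph{nonnegative}, so in fact $1-a\Gamma(t)\le 1$ and $1-\tilde a\Gamma(t)\le 1$, the opposite of what you wrote. For the upper bound this does no harm: the positivity of $-1/\tilde a+\Gamma(t)$ that you need in order to invert follows not from your ``sign facts'' but directly from $0<w(t)\le -1/\tilde a+\Gamma(t)$. For the lower bound the situation is different: from $v>0$ and $v\ge -1/a+\Gamma$ one cannot deduce that the right-hand side is positive, and indeed it need not be under the stated hypotheses (take $\gamma\equiv -1$, $h\equiv -1$, $a=-10$, $\tilde a=-1/2$, $t_1-t_0=1/2$; then $1-a\Gamma(t_1)=-4$ and the asserted lower bound $a/(1-a\Gamma(t_1))=5/2$ exceeds $h=-1$). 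The lower estimate really holds on the subinterval where $1-a\Gamma(t)>0$, and that is all the paper ever uses, since this is exactly the range before the comparison solution $a/(1-a\Gamma)$ diverges; the paper's ``the lower bound can be established following the same lines'' passes over the same point. Replace your incorrect sign claim by the two observations above and your argument goes through.
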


\begin{proof} We only establish the upper bound, since the lower bound can be established following the same lines.

Consider the function 
\begin{equation*}
  z(t)= \tilde a+ \int_{t_0}^t  \gamma h^2(s) ds,
\end{equation*}
which satisfies $h(t)\leq z(t)<0$ for all $t\in [t_0, t_1]$ by \eqref{intineq2}, $z(t_0)=\tilde a<0$, and 
\begin{equation*}
  z'(t)=  \gamma h^2(t)\leq  \gamma z^2(t).
\end{equation*}
Since the above differential inequality can be solved explicitly, we end up with 
\begin{equation*}
  h(t)\leq z(t)\leq \frac{\tilde a}{1-\tilde a \int_{t_0}^t \gamma(s) ds}.
\end{equation*}
\end{proof}


\begin{thebibliography}{}

\bibitem{AurKol2017}
P.~Aursand and U.~Koley.
\newblock Local discontinuous {G}alerkin schemes for a nonlinear variational
  wave equation modelling liquid crystals.
\newblock {\em J. Comput. Appl. Math.}, 317:478--499, 2017.

\bibitem{Bre2016}
A.~Bressan.
\newblock Uniqueness of conservative solutions for nonlinear wave equations via
  characteristics.
\newblock {\em Bull. Braz. Math. Soc. (N.S.)}, 47(1):157--169, 2016.

\bibitem{BCZ}
A.~Bressan, G.~Chen, and Q.~Zhang.
\newblock Unique conservative solutions to a variational wave equation.
\newblock {\em Arch. Ration. Mech. Anal.}, 217(3):1069--1101, 2015.

\bibitem{BreZhe2006}
A.~Bressan and Y.~Zheng.
\newblock Conservative solutions to a nonlinear variational wave equation.
\newblock {\em Comm. Math. Phys.}, 266(2):471--497, 2006.

\bibitem{ChrGru2024}
T.~Christiansen and K.~Grunert.
\newblock Rate of convergence for numerical $\alpha$-dissipative solutions of the Hunter--Saxton equation.
\newblock \arxiv{2411.07712}

\bibitem{Daf2011}
C.~M.~Dafermos.
\newblock Generalized characteristics and the Hunter--Saxton equation.
\newblock {\em J. Hyperbolic Differ. Equ. }, 8(1):159--168, 2011. 

\bibitem{GaLiWo2023}
Y.~Gao, H.~Liu, and T.~K.~Wong.
\newblock Asymptotic behavior of conservative solutions to the Hunter--Saxton equation.
\newblock {\em SIAM J. Math. Anal.}, 55(5):5483--5525, 2023

\bibitem{GlaHunZhe1996}
R.~T. Glassey, J.~K. Hunter, and Y.~Zheng.
\newblock Singularities of a variational wave equation.
\newblock {\em J. Differential Equations}, 129(1):49--78, 1996.

\bibitem{GlaHunZhe1997}
R.~T. Glassey, J.~K. Hunter, and Y.~Zheng.
\newblock Singularities and oscillations in a nonlinear variational wave
  equation.
\newblock In {\em Singularities and oscillations ({M}inneapolis, {MN},
  1994/1995)}, volume~91 of {\em IMA Vol. Math. Appl.}, pages 37--60. Springer,
  New York, 1997.

\bibitem{Gru2015}
K.~Grunert.
\newblock Blow-up for the two-component {C}amassa-{H}olm system.
\newblock {\em Discrete Contin. Dyn. Syst.}, 35(5):2041--2051, 2015.

\bibitem{GruHo2022}
K.~Grunert and H.~Holden.
\newblock Uniqueness of conservative solutions for the Hunter--Saxton equation.
\newblock {Res. Math. Sci}, 9(2): Paper No. 19, 54 pp., 2022.

\bibitem{GruRei2021}
K.~Grunert and A.~Reigstad.
\newblock Traveling waves for the nonlinear variational wave equation.
\newblock {\em Partial Differ. Equ. Appl.}, 2(5):Paper No. 61, 21, 2021.

\bibitem{HR}
H.~Holden and X.~Raynaud.
\newblock Global semigroup of conservative solutions of the nonlinear
  variational wave equation.
\newblock {\em Arch. Ration. Mech. Anal.}, 201(3):871--964, 2011.

\bibitem{holden2015front}
H.~Holden and N.~H. Risebro.
\newblock {\em Front {T}racking for {H}yperbolic {C}onservation {L}aws}, volume
  152 of {\em Applied Mathematical Sciences}.
\newblock Springer, Heidelberg, second edition, 2015.

\bibitem{HunSax1991}
J.~K. Hunter and R.~Saxton.
\newblock Dynamics of director fields.
\newblock {\em SIAM J. Appl. Math.}, 51(6):1498--1521, 1991.

\bibitem{KorKorMel2009}
V.~A. Korneev, S.~L. Korolev, and A.~A. Melikyan.
\newblock Investigation of the behavior of weak waves in the solution of a
  vector variational wave equation.
\newblock {\em Differ. Uravn.}, 45(9):1237--1248, 2009.

\bibitem{Lin1992}
H.~Lindblad.
\newblock Global solutions of nonlinear wave equations.
\newblock {\em Comm. Pure Appl. Math.}, 45(9):1063--1096, 1992.

\bibitem{Melikyan}
A.~A. Melikyan.
\newblock {\em Generalized characteristics of first order {PDE}s}.
\newblock Birkh\"auser Boston, Inc., Boston, MA, 1998.
\newblock Applications in optimal control and differential games.

\bibitem{Sax1989}
R.~A. Saxton.
\newblock Dynamic instability of the liquid crystal director.
\newblock In {\em Current progress in hyperbolic systems: {R}iemann problems
  and computations ({B}runswick, {ME}, 1988)}, volume 100 of {\em Contemp.
  Math.}, pages 325--330. Amer. Math. Soc., Providence, RI, 1989.

\bibitem{Sug2017}
Y.~Sugiyama.
\newblock Degeneracy in finite time of 1{D} quasilinear wave equations {II}.
\newblock \arxiv{1601.05191}.

\bibitem{Sug2022}
Y.~Sugiyama.
\newblock Formation of singularities for a family of 1{D} quasilinear wave
  equations.
\newblock {\em Indiana Univ. Math. J.}, 71(6):2529--2549, 2022.

\bibitem{ZhaZhe2001}
P.~Zhang and Y.~Zheng.
\newblock Rarefactive solutions to a nonlinear variational wave equation of
  liquid crystals.
\newblock {\em Comm. Partial Differential Equations}, 26(3-4):381--419, 2001.

\end{thebibliography}
\end{document}